\pgfplotsset{compat=1.15}
\newcommand{\footremember}[2]{%
    \footnote{#2}
    \newcounter{#1}
    \setcounter{#1}{\value{footnote}}%
}
\def\R{{\mathbb R}}
\def\Q{{\mathbb Q}}
\def\N{{\mathbb N}}
\DeclareMathOperator{\rank}{rank}
\DeclareMathOperator{\FJ}{FJ}
\DeclareMathOperator{\KKT}{KKT}
\DeclareMathOperator{\diag}{diag}
\DeclareMathOperator{\LT}{LT}
\newtheorem{theorem}{\bf Theorem}
\newtheorem{lemma}{\bf Lemma}
\newtheorem{algorithm}{\bf Algorithm}
\newtheorem{example}{\bf Example}
\newtheorem{remark}{\bf Remark}
\providecommand{\keywords}[1]
{
  \small	
  \textbf{\textbf{Keywords:}} #1
}
\begin{document}
\definecolor{qqzzff}{rgb}{0,0.6,1}
\definecolor{ududff}{rgb}{0.30196078431372547,0.30196078431372547,1}
\definecolor{xdxdff}{rgb}{0.49019607843137253,0.49019607843137253,1}
\definecolor{ffzzqq}{rgb}{1,0.6,0}
\definecolor{qqzzqq}{rgb}{0,0.6,0}
\definecolor{ffqqqq}{rgb}{1,0,0}
\definecolor{uuuuuu}{rgb}{0.26666666666666666,0.26666666666666666,0.26666666666666666}
\newcommand{\vi}[1]{\textcolor{blue}{#1}}
\newif\ifcomment
\commentfalse
\commenttrue
\newcommand{\comment}[3]{%
\ifcomment%
	{\color{#1}\bfseries\sffamily#3%
	}%
	\marginpar{\textcolor{#1}{\hspace{3em}\bfseries\sffamily #2}}%
	\else%
	\fi%
}

\newcommand{\mapr}[1]{{{\color{blue}#1}}}
\newcommand{\revise}[1]{{{\color{blue}#1}}}

\title{A symbolic algorithm for exact polynomial optimization strengthened with Fritz
John conditions}

\author{%
Ngoc Hoang Anh Mai\footremember{1}{CNRS; LAAS; 7 avenue du Colonel Roche, F-31400 Toulouse; France.}
  }

\maketitle

\begin{abstract}
Consider a polynomial optimization problem.
Adding polynomial equations generated by the Fritz John conditions to the constraint set does not change the optimal value.
As proved in [arXiv:2205.04254 (2022)], the objective polynomial has finitely many values on the new constraint set under some genericity assumption.
Based on this, we provide an algorithm that allows us to compute exactly this optimal value.
Our method depends on the computations of real radical generators and Gr\"obner basis.
Finally, we apply our method to solve some instances of mathematical program with complementarity constraints.
\end{abstract}
\keywords{Gr\"obner basis; real radical; gradient ideal; Fritz John conditions; Karush--Kuhn--Tucker conditions; polynomial optimization}
\tableofcontents

\section{Introduction}
Besides numerical methods, symbolic tools that allow us to obtain the exact optimal value for a polynomial optimization problem have various applications in engineering science. 
Therefore developing such methods to address the exactness challenge becomes essential.

\paragraph{Polynomial optimization with singularities.}
Under the smoothness assumption on the constraint sets,  we can use the method of Greuet, Guo, Safey El Din, and Zhi in \cite{greuet2012global} to compute the exact optimal solution for a polynomial optimization problem.
Aside from this, the work of Greuet and Safey El Din \cite{greuet2014probabilistic} allows us to handle the case where the constraint sets have finitely many singularities.

In our previous works \cite{mai2022exact,mai2022explicit}, we give positive answers to two open questions (stated by Nie in \cite[Section 6]{nie2013exact}) concerning the existence and complexity of non-negativity certificates on a semi-algebraic set possibly having singularities.
Despite this theoretical guarantee of exactness, Lasserre's hierarchies of semidefnite relaxations \cite{lasserre2001global} based on these certificates have not yielded satisfactory results in such singular cases.
Hence we rely on the results in \cite{mai2022exact,mai2022explicit} to provide a symbolic algorithm in this paper for computing the exact optimal values of polynomial optimization problems whose optimal solutions are possibly singularities.

\paragraph{Problem statement.}
Let $\R[x]$ (resp. $\Q[x]$) stand for the ring of polynomials with real (resp. rational) coefficients in the vector of variables $x$.
We denote by $\R_r[x]$  the linear space of polynomials in $\R[x]$ over $\R$ of degree at most $r$.
Given $f,g_1,\dots,g_m\in\R[x]$, consider polynomial optimization problem:
\begin{equation}\label{eq:pop}
    f^\star:=\inf\limits_{x\in S(g)} f(x)\,,
\end{equation}
where $S(g)$ is the basic semi-algebraic set associated with $g=(g_1,\dots,g_m)$, i.e.,
\begin{equation}
    S(g):=\{x\in\R^n\,:\,g_j(x)\ge 0\,,\,j=1,\dots,m\}\,.
\end{equation}
\paragraph{First-order optimality conditions.}
The gradient of $p\in\R[x]$, denoted by $\nabla p$, is defined by $\nabla p=(\frac{\partial p}{\partial x_1},\dots,\frac{\partial p}{\partial x_n})$.
We say that the Fritz John conditions hold for problem \eqref{eq:pop} at $u\in S(g)$  if 
\begin{equation}\label{eq:FJcond}
\begin{cases}
    		\exists (\lambda_0,\dots,\lambda_m)\in [0,\infty)^{m+1}\,:\\
          \lambda_0 \nabla f(u)=\sum_{j=1}^m \lambda_j \nabla g_j(u)\,,\\
          \lambda_j g_j(u) =0\,,\,j=1,\dots,m\,,\\
          \sum_{j=0}^m \lambda_j^2=1
    \end{cases}
   \Leftrightarrow
    \begin{cases}
         \exists (\lambda_0,\dots,\lambda_m)\in\R^{m+1}\,:\\
         \lambda_0^2 \nabla f(u)=\sum_{j=1}^m \lambda_j^2 \nabla g_j(u)\,,\\
          \lambda_j^2 g_j(u) =0\,,\,j=1,\dots,m\,,\\
          \sum_{j=0}^m \lambda_j^2=1\,.
    \end{cases}
\end{equation}
In addition, the Karush--Kuhn--Tucker conditions hold for problem \eqref{eq:pop} at $u\in S(g)$  if 
\begin{equation}\label{eq:KKT.cond}
\begin{cases}
    		\exists (\lambda_1,\dots,\lambda_m)\in[0,\infty)^{m}\,:\\
          \nabla f(u)=\sum_{j=1}^m \lambda_j \nabla g_j(u)\,,\\
          \lambda_j g_j(u) =0\,,\,j=1,\dots,m
    \end{cases}
       \Leftrightarrow
    \begin{cases}
         \exists (\lambda_1,\dots,\lambda_m)\in\R^{m+1}\,:\\
         \nabla f(u)=\sum_{j=1}^m \lambda_j^2 \nabla g_j(u)\,,\\
          \lambda_j^2 g_j(u) =0\,,\,j=1,\dots,m\,.
    \end{cases}
\end{equation}
If $u$ is a local minimizer for problem \eqref{eq:pop}, then the Fritz John conditions hold for problem \eqref{eq:pop} at $u$.
The Karush--Kuhn--Tucker conditions can be considered a particular case of the Fritz John conditions.
When some constraint qualification holds at $u$, the Karush--Kuhn--Tucker conditions hold for problem \eqref{eq:pop} at $u$.
However, there exist cases where the Karush--Kuhn--Tucker conditions do not hold for problem \eqref{eq:pop} at any local minimizer of this problem.
Interested readers are referred to \cite{pham2019tangencies,pham2019optimality,
guo2020types,ha2009solving,pham2019optimality} theoretical and practical results involving first-order optimality conditions in polynomial optimization.

In the unconstrained case of problem \eqref{eq:pop} (i.e., $S(g)=\R^n$), the Fritz John and Karush--Kuhn--Tucker conditions (\eqref{eq:FJcond} and \eqref{eq:KKT.cond}) reduce to $\nabla f(u)=0$. 
In this case, Magron, Safey El Din, and Vu \cite{gradsos} rely on the sum-of-squares strengthenings with gradient ideals by Nie, Demmel, and Sturmfels \cite{nie2006minimizing} to provide an exact method for computing $f^\star$ when the coefficients of $f$ are rational.

\paragraph{Motivation for the Fritz John conditions.}
In nonlinear programming, constraint qualifications are known to be sufficient conditions for the Karush--Kuhn--Tucker conditions.
However, in many recent applications, constraint qualifications have not  been satisfied, which leads to limitations of the classical methods using the Karush--Kuhn--Tucker conditions.
Mathematical program with complementarity constraints \cite{albrecht2017mathematical} is a class of challenging optimization problems that belong to such a situation.
It is because complementarity constraints typically result in the violation of all standard constraint qualifications.
Our method based on the Fritz John conditions has the potential to overcome this limitation as it does not requires any constraint qualification.
We illustrate this on some instances of mathematical programs with complementarity constraints (see Example \ref{exam:MPCC}).

\paragraph{Polynomials from first-order optimality conditions.}
Given $h_1,\dots,h_l\in\R[x]$, let $V(h)$ be the (real) variety generated by $h=(h_1,\dots,h_l)$ defined  by
\begin{equation}
V(h):=\{x\in\R^n\,:\,h_j(x)=0\,,\,j=1,\dots,l\}\,.
\end{equation}
Let $\bar\lambda=(\lambda_0,\lambda_1,\dots,\lambda_m)$ be a vector of $m$ variables.
Set $\lambda=(\lambda_1,\dots,\lambda_m)$.
Given $f\in\R[x]$, $g=(g_1,\dots,g_m)$ with $g_j\in\R[x]$, we define:
\begin{equation}\label{eq:.polyFJ}
    h_{\FJ}:=(\lambda_0\nabla f-\sum_{j=1}^m \lambda_j \nabla g_j,\lambda_1g_1,\dots,\lambda_mg_m,1-\sum_{j=0}^m\lambda_j^2)\,,
\end{equation}
\begin{equation}\label{eq:.polyFJ.plus}
    h_{\FJ}^+:=(\lambda_0^2\nabla f-\sum_{j=1}^m \lambda_j^2 \nabla g_j,\lambda_1^2g_1,\dots,\lambda_m^2g_m,1-\sum_{j=0}^m\lambda_j^2)\,,
\end{equation}
\begin{equation}\label{eq:.polyKKT}
    h_{\KKT}:=(\nabla f-\sum_{j=1}^m \lambda_j \nabla g_j,\lambda_1g_1,\dots,\lambda_mg_m)\,,
\end{equation}
\begin{equation}\label{eq:.polyKKT.plus}
    h_{\KKT}^+:=(\nabla f-\sum_{j=1}^m \lambda_j^2 \nabla g_j,\lambda_1^2g_1,\dots,\lambda_m^2g_m)\,.
\end{equation}
Here $h_{\FJ}$, $h_{\FJ}^+\subset \R[x,\bar \lambda]$  include polynomials from the Fritz John conditions stated in \eqref{eq:FJcond} and $h_{\KKT},h_{\KKT}^+\subset \R[x, \lambda]$ include polynomials from the Karush--Kuhn--Tucker conditions stated in \eqref{eq:KKT.cond}.
Set
\begin{equation}
\begin{array}{rl}
&W_1:=(S(g)\times \R^{m+1}) \cap V(h_{\FJ})\,,\, \quad W_2:=(S(g)\times \R^{m+1}) \cap V(h_{\FJ}^+),\\
& W_3:=(S(g)\times \R^{m}) \cap V(h_{\KKT})\,,\, \quad W_4=(S(g)\times \R^{m}) \cap V(h_{\KKT}^+)\,.
\end{array}
\end{equation}
If the Fritz John (resp. Karush--Kuhn--Tucker) conditions hold for problem \eqref{eq:pop} at $u$, then $u$ is in the projections of $W_1$ and $W_2$ (resp. $W_3$ and $W_4$) onto the linear space spanned by the first $n$ coordinates and 
\begin{equation}\label{eq:opt.val.cond}
f^\star=\min f(W_1)=\min f(W_2)\quad \text{(resp. $f^\star=\min f(W_3)=\min f(W_4)$)}\,.
\end{equation}
Here $f^\star$ is defined as in \eqref{eq:pop}.

\paragraph{Contribution.}
Our goal in this paper is to design a symbolic algorithm to compute the exact optimal value $f^\star$ attained by $f$ over $S(g)$ (problem \eqref{eq:pop}) under mild conditions.
Let $s\in\{1,2,3,4\}$ be fixed.
The idea of our method is as follows:

Relying on the computations of Gr\"obner basis and real radical generators, we obtain univariate polynomials $\hat h_1,\dots,\hat h_r$ whose set of common real zeros is identically the Zariski closure of the image $f(W_s)$.
Let $d$ be the upper bound on the degrees of $f,g_i,h_j$.
Then each $\hat h_t$ has degree not larger than
\begin{equation}\label{eq:bound.degree.univar.poly}
2\times d^{2^{{O((n+2m)^2)}^{2^{n+2m}}}}\,.
\end{equation}

Moreover, we prove that $f(W_s)$ coincides with its Zariski closure when the image $f(W_s)$ has finitely many elements.
In this case, we get all elements of $f(W_s)$ by solving the system of the univariate polynomials $\hat h_1=\dots=\hat h_r=0$.
By \eqref{eq:opt.val.cond}, $f^\star$ is the smallest one among these elements.

It remains to provide sufficient conditions for $f(W_s)$ to be finite. 
Thanks to our previous works in \cite{mai2022exact,mai2022explicit}, the image $f(W_1)$ (resp. $f(W_2)$) is finite when the set of critical points $C(g)$ (resp. $C^+(g)$) defined later in  \eqref{eq:crit.point} (resp. \eqref{eq:crit.point.plus}) is finite.
Moreover, if the Karush--Kuhn--Tucker conditions hold for problem \eqref{eq:pop} at some global minimizer, the images $f(W_3)$ and $f(W_4)$ are finite.

Due to its high computational cost \eqref{eq:bound.degree.univar.poly},  we illustrate our method with some modest-sized examples.
\paragraph{Related work.}
Greuet and Safey El Din provide in \cite{greuet2014probabilistic} a probabilistic algorithm for solving POP on a real variety $V(h)$, where $h=(h_1,\dots,h_l)$ with $h_j\in\R[x]$. 
They can extract a solution under the following regularity assumptions: the ideal generated by $h$ is radical, $V(h)$ is equidimensional of positive dimension, and $V(h)$ has finitely many singular points. 
We emphasize that our method for problem \eqref{eq:pop} does not require the finiteness of the set of critical points $C(g)$ (resp. $C^+(g)$) but only requires the finiteness of the image of $C(g)$ (resp. $C^+(g)$) under $f$. 

\paragraph{Organization.}
We organize the paper as follows: 
Section \ref{sec:prelimi} is to recall some necessary tools from real algebraic geometry. 
Sections \ref{sec:image.variety} and \ref{sec:image.semial} are to construct algorithms for computing the Zariski closures of the images of a real variety and a semi-algebraic set under a polynomial together with the dimensions of these images.
Section \ref{sec:application} presents our main algorithm for computing the exact optimal value for a polynomial optimization problem under mild conditions.
We also illustrate our method with some interesting examples in this section.
\section{Preliminaries}
\label{sec:prelimi}
This section presents some preliminaries from real algebraic geometry needed to construct our main algorithms.
\subsection{Ideals}
Denote by $\Sigma^2[x]$ (resp. $\Sigma^2_r[x]$) the cone of sum of squares of polynomials in $\R[x]$ (resp. $\R_r[x]$).
Given $h_1,\dots,h_l\in\R[x]$, we call $I(h)$ the ideal generated by $h=(h_1,\dots,h_l)$, if
\begin{equation}
    I(h):= \sum_{j=1}^l h_j \R[x]\,.
\end{equation}
Note that the intersection of two ideals is also an ideal.
The real radical of an ideal $I(h)$, denoted by $\sqrt[\R]{I(h)}$, is defined as
\begin{equation}
{\sqrt[\R]{I(h)}}=\{f\in\R[x]\,:\,\exists m\in \N\,:\,-f^{2m}\in\Sigma^2[x]+I(h)\}\,.
\end{equation}
Krivine--Stengle's Nichtnegativstellens\"atze \cite{krivine1964anneaux} imply that
\begin{equation}\label{eq:real.radi}
\sqrt[\R]{I(h)}=\{p\in\R[x]\,:\,p=0\text{ on }V(h)\}\,.
\end{equation}
We say that $I(h)$ is real radical if $I(h)=\sqrt[\R]{I(h)}$.
\begin{remark}\label{rem:radical.gener}
Becker and Neuhaus provide a method in \cite{becker1993computation,neuhaus1998computation} to compute a generator of the real radical $\sqrt[\R]{I}$ with a given generator of an ideal $I\subset\R[x]$. 
The ideal is as follows: They first reduce the general situation to the univariate case by using factorization and quantifier elimination.
They then handle zero-dimensional ideals after localizing and contracting the one-dimensional ideals.
Thereby the degrees of elements in the generator of $\sqrt[\R]{I}$ are bounded from above by $d^{2^{
 O(n^2)}}$ if polynomials in generator of $I$ have degrees at most $d$.
 
Safey El Din, Yang, and Zhi provide in \cite{el2021computing} a probabilistic algorithm with complexity $(rnd^n)^{O(1)}$ to compute a generator of $\sqrt[\R]{I}$ from a generator $(h_1,\dots,h_r)\subset \Q[x]$ of $I$.
Another method of Baldi and Mourrain \cite{baldi2021computing} to compute a generator of $\sqrt[\R]{I}$ depends on the numerical solutions of moment relaxations.
\end{remark}
Given $A\subset \R^n$, let $I(A)$ denote the vanishing ideal of $A$, i.e., 
\begin{equation}
I(A)=\{f\in\R[x]\,:\,f=0\text{ on }A\}\,.
\end{equation}
Given $J$ being a finite subset of $\R[x]$, let $V(J)$ stand for the real variety defined by $J$, i.e., 
\begin{equation}
V(J)=\{x\in\R^n\,:\,p(x)=0\,,\,\forall p\in J\}\,.
\end{equation}
The Zariski closure of a subset $A\subset\R^n$, denoted by $Z(A)$, is the smallest real variety containing $A$. 
By \cite[Proposition 1, Section 4, Chapter 4]{cox2013ideals}, it holds that
\begin{equation}\label{eq:zariski.def}
Z(A)=V(I(A))\,.
\end{equation}

\subsection{Gr\"obner basis}
Given $p=\sum_{j=1}^rp_{\alpha_j} x^{\alpha_j}\in\R[x]$ such that $p_{\alpha_j}\ne 0$ and $x^{\alpha_r}>\dots>x^{\alpha_r}$, according to lex order, we call $p_{\alpha_r} x^{\alpha_r}$ the leading term of $p$, denote by $\LT(p)$.
Given $I\subset\R[x]$ being an ideal other than $\{0\}$, we denote by $\LT(I)$ the set of leading terms of elements of $I$.

A finite subset $G=\{h_1,\dots,h_r\}\subset\R[x]$ generating an ideal $I$ is called a Gr\"obner basis if
$V(\LT(h_1),\dots,\LT(h_r))=V(\LT(I))$. 
\begin{remark}\label{rem:bound.Grobner}
Buchberger suggests in \cite{buchberger1976theoretical} an algorithm to compute a Gr\"obner basis $h^G$ of an ideal $I$ with a given generator $h$ (see also F4 and F5 algorithms by Faug\`ere in \cite{faugere1999new,faugere2002new}).
For every couple $h_i, h_j$ in $h$, denote by $a_{ij}$ the least common multiple of $\LT(h_i)$ and $\LT(h_j)$.
Buchberger's algorithm consists of the following steps:
\begin{enumerate}
\item Set $h^G := h$.
\item Choose two polynomials $h_i,h_j$ in $h^G$ and let 
$s_{ij} = \frac{a_{ij}}{LT(h_i)}h_i-\frac{a_{ij}}{LT(h_j)}h_j$.
\item Reduce $s_{ij}$, with the multivariate division algorithm relative to the set $h^G$ until the result is not further reducible. If the result is non-zero, add it to $h^G$.
\item Repeat Steps 2 and 3.
\end{enumerate}
Assume that each element of the generator $h$ has degree at most $d$.
 Dub\'e proves in \cite{dube1990structure} that Buchberger's algorithm has complexity
\begin{equation}\label{eq:complex.radical}
d^{2^{n+o(1)}}
\end{equation} 
to produce the Gr\"obner basis $h^G$, whose elements have degrees at most $2\left({\frac {d^{2}}{2}}+d\right)^{2^{n-2}}$.

To explore recent developments of Groebner basis, we refer the readers to \cite{berthomieu2022new,berthomieu2022guessing,
berthomieu2015linear,berthomieu2022faster,
faugere2016complexity}.
\end{remark}

We restate the elimination theorem (see, e.g., \cite[Theorem 2, Section 1, Chapter 3]{cox2013ideals}) in the following lemma:

\begin{lemma} Let $I \subset \R[x]$ be an ideal and let
$h^G$ be a Gr\"obner basis of $I$ w.r.t. lex order where $x_1 \succ x_2 \succ \dots \succ x_n$. Then
for $j=1,\dots,n$, the set
$h^G \cap \R[x_{j+1},\dots,x_n]$
is a Gr\"obner basis of the ideal $I\cap \R[x_{j+1},\dots,x_n]$.
\end{lemma}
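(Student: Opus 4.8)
\medskip
\noindent\textbf{Proof proposal.}
The plan is to fix $j\in\{1,\dots,n\}$ and abbreviate $R:=\R[x_{j+1},\dots,x_n]$, $I_j:=I\cap R$ (the $j$-th elimination ideal of $I$), and $G_j:=h^G\cap R$. Since $h^G$ is a finite set, so is $G_j$, and plainly $G_j\subset I_j$. By the definition of a Gr\"obner basis recalled above, it then suffices to prove two things: (i) that $G_j$ generates $I_j$ as an ideal of $R$, and (ii) that $V(\LT(g):g\in G_j)=V(\LT(I_j))$. Both will come out of running the multivariate division algorithm entirely inside $R$, once the relevant feature of the lexicographic order has been isolated.

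The first thing I would prove is exactly that feature, the one place where the choice of monomial order matters: \emph{if $g\in\R[x]$ and $\LT(g)\in R$ — that is, the leading monomial of $g$ involves none of $x_1,\dots,x_j$ — then in fact $g\in R$.} To see this, suppose some monomial $x^\beta$ occurring in $g$ had $\beta_i>0$ for some $i\le j$, and let $x^\gamma$ be the leading monomial of $g$, so $\gamma_1=\dots=\gamma_j=0$. The first coordinate at which $\beta$ and $\gamma$ differ has index $\le j$, and at that coordinate $x^\beta$ has a positive exponent while $x^\gamma$ has exponent $0$, so $x^\beta\succ x^\gamma$ in lex order, contradicting the choice of $x^\gamma$. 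I expect this elementary observation — valid for lex and, more generally, for any elimination order — to be the only non-bookkeeping step; the remainder is routine.

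Granting it, I would establish (i). The inclusion $I(G_j)\subseteq I_j$ is immediate since $G_j\subset h^G\subset I$ and $G_j\subset R$. For the converse, take $f\in I_j$. As $f\in I$ and $h^G$ is a Gr\"obner basis of $I$, division of $f$ by $h^G$ returns remainder $0$; in particular $\LT(f)$ is divisible by $\LT(g)$ for some $g\in h^G$. Since $f\in R$ we have $\LT(f)\in R$, hence $\LT(g)\in R$, hence $g\in R$ by the observation, so $g\in G_j$. Subtracting the appropriate monomial multiple of $g$ — an operation that stays inside $R$ — cancels $\LT(f)$ and replaces $f$ by an element of $I_j$ with strictly smaller leading monomial. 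Because lex well-orders the monomials, iterating terminates and writes $f$ as an $R$-linear combination of elements of $G_j$; thus $I_j=I(G_j)$.

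Finally, for (ii) I would argue as follows. The inclusion $V(\LT(I_j))\subseteq V(\LT(g):g\in G_j)$ holds because $G_j\subset I_j$, so $\{\LT(g):g\in G_j\}\subseteq\LT(I_j)$. For the reverse, let $f\in I_j$; the reduction above shows $\LT(f)=x^\alpha\,\LT(g)$ for some $g\in G_j$ and some monomial $x^\alpha$, so $\LT(f)$ vanishes wherever $\LT(g)$ does. Hence any common zero of $\{\LT(g):g\in G_j\}$ is a zero of $\LT(f)$ for every $f\in I_j$, i.e. lies in $V(\LT(I_j))$. Combining this with (i) shows that $G_j$ is a Gr\"obner basis of $I_j$, which would complete the proof.
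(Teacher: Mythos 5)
The paper offers no proof of this lemma: it is presented as a restatement of the Elimination Theorem with a pointer to Cox--Little--O'Shea \cite[Theorem 2, Section 1, Chapter 3]{cox2013ideals}, so there is nothing in the paper to compare your argument against. Your proof is the standard textbook argument and is correct. The one non-routine step is exactly the one you isolate: for a lex (more generally, any elimination) order, a polynomial whose leading monomial lies in $\R[x_{j+1},\dots,x_n]$ must itself lie in $\R[x_{j+1},\dots,x_n]$. Granting that, the rest is the usual division/Noetherian-induction, carried out entirely inside $\R[x_{j+1},\dots,x_n]$, and both (i) and (ii) follow cleanly.

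One caveat worth flagging concerns the paper's own definition of Gr\"obner basis, which it states as the variety equality $V(\LT(h_1),\dots,\LT(h_r))=V(\LT(I))$ rather than the usual ideal equality $\langle \LT(h_1),\dots,\LT(h_r)\rangle=\langle \LT(I)\rangle$. The variety condition is strictly weaker (compare $\langle x\rangle$ and $\langle x^2\rangle$, which have the same zero set but are distinct ideals), and your step ``division of $f$ by $h^G$ returns remainder $0$, in particular $\LT(f)$ is divisible by some $\LT(g)$'' is a consequence of the ideal equality, not of the variety equality. You are implicitly invoking the standard definition, which is surely what the paper intends; it is worth noting, though, that the definition as literally printed is too weak for your argument (and for most Gr\"obner-basis reasoning the paper later relies on) to go through as written.
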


\subsection{Semi-algebraic sets}
A semi-algebraic subset of $\R^n$ is a subset of the form
\begin{equation}\label{eq:def.semi.set}
\bigcup_{i=1}^t\bigcap_{j=1}^{r_i}\{x\in\R^n\,:\,f_{ij}(x)*_{ij}0\}\,,
\end{equation}
where $f_{ij}\in\R[x]$ and $*_{ij}$ is either $>$ or $=$.
Equivalently, a semi-algebraic set is a finite union of basic semi-algebraic sets.
We define the dimension of a semi-algebraic subset $S$ of $\R^n$, denoted by $\dim(S)$, to be the highest dimension at points at which $S$ is a real submanifold.

We recall some properties of semi-algebraic sets in the following lemma:
\begin{lemma}\label{lem:properties.semi-algebraic.set}
The following statements hold:
\begin{enumerate}
\item The image of a semi-algebraic set under a polynomial is semi-algebraic.
\item A semi-algebraic subset of $\R$ is a finite union of intervals and points in $\R$.
\end{enumerate}
\end{lemma}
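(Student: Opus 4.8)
The statement to prove is Lemma~\ref{lem:properties.semi-algebraic.set}, asserting two classical facts: (1) the image of a semi-algebraic set under a polynomial map is semi-algebraic, and (2) a semi-algebraic subset of $\R$ is a finite union of points and intervals. Both are standard consequences of the Tarski--Seidenberg principle, so the plan is to reduce everything to quantifier elimination over real closed fields and then do a small amount of bookkeeping.

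\textbf{Part (1).} Let $S\subset\R^n$ be semi-algebraic and $p=(p_1,\dots,p_k)\colon\R^n\to\R^k$ a polynomial map; I want to show $p(S)\subset\R^k$ is semi-algebraic. First I would write $S$ in the normal form \eqref{eq:def.semi.set}, which exhibits membership $x\in S$ as a quantifier-free first-order formula $\Phi(x)$ in the language of ordered rings with real coefficients (a finite Boolean combination of polynomial equalities and strict inequalities). Then $y\in p(S)$ is equivalent to the formula
\begin{equation}
\exists x_1\cdots\exists x_n\ \Bigl(\Phi(x_1,\dots,x_n)\ \wedge\ \bigwedge_{i=1}^k y_i = p_i(x_1,\dots,x_n)\Bigr)\,.
\end{equation}
This is a first-order formula in the free variables $y_1,\dots,y_k$. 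By the Tarski--Seidenberg theorem (quantifier elimination for the theory of real closed fields), it is equivalent to a quantifier-free formula $\Psi(y)$, i.e.\ a finite Boolean combination of polynomial (in)equalities; rewriting $\Psi$ in disjunctive normal form and replacing each $\le,\ne$ by unions/the appropriate combination of $>$ and $=$ puts the solution set in the form \eqref{eq:def.semi.set}. Hence $p(S)=\{y:\Psi(y)\}$ is semi-algebraic.

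\textbf{Part (2).} Let $A\subset\R$ be semi-algebraic, so $A=\bigcup_{i=1}^t\bigcap_{j=1}^{r_i}\{x\in\R:f_{ij}(x)*_{ij}0\}$ with $f_{ij}\in\R[x]$ univariate and $*_{ij}\in\{>,=\}$. Since a finite union of finite unions of points and intervals is again of this type, it suffices to treat one basic piece $B=\bigcap_{j=1}^{r}\{x:f_j(x)*_j0\}$. Let $R$ be the finite set consisting of all real roots of those $f_j$ that are not identically zero (if some $f_j\equiv 0$, a constraint $f_j=0$ is vacuous and $f_j>0$ makes $B$ empty). Ordering $R$ as $a_1<\cdots<a_N$ partitions $\R$ into the finitely many points $a_i$ and the open intervals $(-\infty,a_1),(a_1,a_2),\dots,(a_N,+\infty)$. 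On each such point and on each such open interval every $f_j$ has constant sign (for the intervals this is the intermediate value theorem: no root inside; for the points it is immediate), so the truth value of each atomic condition $f_j(x)*_j0$ is constant there; therefore $B$ is the union of those partition cells on which all the conditions hold. Taking the union over $i=1,\dots,t$ gives that $A$ itself is a finite union of points and intervals. The one mild subtlety to record is the degenerate cases (zero polynomials, empty intersections, repeated roots), which I would dispatch with the remark above; there is no real obstacle, since the argument is entirely elementary once Tarski--Seidenberg is invoked for Part~(1).

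\textbf{Main obstacle.} There is no serious difficulty: the content of the lemma is exactly the Tarski--Seidenberg principle (for Part 1) and a finite sign-partition argument (for Part 2), both of which are classical. The only thing to be careful about is phrasing the semi-algebraic condition and the existential projection as genuine first-order formulas over the reals so that quantifier elimination applies verbatim, and handling the edge cases in Part~(2); accordingly I would keep the proof short and cite a standard reference (e.g.\ Bochnak--Coste--Roy) rather than reproving Tarski--Seidenberg.
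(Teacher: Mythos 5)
The paper does not actually prove this lemma; it simply cites two references: Korkina--Kushnirenko (whose title is literally ``Another proof of the Tarski--Seidenberg theorem'') for the first statement, and \cite[Example 1.1.1]{pham2016genericity} for the second. Your proof is correct and fills in the argument that those references contain: quantifier elimination (Tarski--Seidenberg) for Part~(1), and the elementary sign-partition argument for Part~(2). So this is essentially the same route the paper is implicitly taking, just written out. Two minor remarks. First, you stated the lemma only for a polynomial $f\colon\R^n\to\R$, but you prove the slightly more general polynomial-map case; that is harmless and in fact more natural. Second, in Part~(2) you should note the trivial edge case where the root set $R$ is empty (all relevant $f_j$ are nonzero constants), in which case the single partition cell is all of $\R$, still an interval --- but this is already implicit in your argument and poses no difficulty.
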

The first statement of Lemma \ref{lem:properties.semi-algebraic.set} is proved by Korkina and Kushnirenko  in \cite{korkina1985another}.
The second statement is given in \cite[Example 1.1.1]{pham2016genericity}.

Let $x_{n+1}$ be a single-variable independent from $x$.
We provide the following two lemmas which characterize the image of a semi-algebraic set under a polynomial:
\begin{lemma}\label{lem:project}
Let $p\in\R[x_{n+1}]$, $f\in\R[x]$ and $h=(h_1,\dots,h_l)$ with $h_j\in\R[x]$.
We define the projection
\begin{equation}\label{eq:proj}
\hat \pi:\R^{n+1}\to\R\,,\quad (x,x_{n+1})\mapsto \hat \pi(x,x_{n+1})=x_{n+1}\,.
\end{equation}
Then it holds that $\hat \pi(V(h,x_{n+1}-f))=f(V(h))$.
\end{lemma}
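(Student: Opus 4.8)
The plan is to establish the set equality $\hat\pi(V(h,x_{n+1}-f)) = f(V(h))$ by a direct double inclusion, unwinding the definitions of $V(\cdot)$ and $\hat\pi$. The key observation is that imposing the single extra equation $x_{n+1} - f(x) = 0$ on $\R^{n+1}$ forces the last coordinate to equal $f(x)$, while the remaining equations $h_1 = \dots = h_l = 0$ involve only $x$ and hence cut out exactly $V(h)$ in the first $n$ coordinates.

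\textbf{Step 1 (the inclusion $\subseteq$).} Take any $t \in \hat\pi(V(h,x_{n+1}-f))$. By definition of the image there is a point $(x,x_{n+1}) \in V(h,x_{n+1}-f) \subseteq \R^{n+1}$ with $\hat\pi(x,x_{n+1}) = x_{n+1} = t$. Membership in $V(h,x_{n+1}-f)$ means $h_j(x) = 0$ for $j=1,\dots,l$ and $x_{n+1} - f(x) = 0$. The first batch of equations says $x \in V(h)$; the last says $t = x_{n+1} = f(x)$. Hence $t \in f(V(h))$.

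\textbf{Step 2 (the inclusion $\supseteq$).} Conversely, take $t \in f(V(h))$, so $t = f(x)$ for some $x \in V(h)$. Form the point $(x, t) \in \R^{n+1}$. Then $h_j(x) = 0$ for all $j$ since $x \in V(h)$, and $t - f(x) = 0$ by choice of $t$; therefore $(x,t) \in V(h, x_{n+1} - f)$. Applying $\hat\pi$ gives $\hat\pi(x,t) = t$, so $t \in \hat\pi(V(h,x_{n+1}-f))$.

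Combining the two inclusions yields the claimed equality. There is essentially no obstacle here: the statement is a bookkeeping identity, and the only thing to be careful about is that $x_{n+1}$ is a genuinely new variable independent from $x$ (as stipulated just before the lemma), so that the polynomial $x_{n+1} - f$ lies in $\R[x,x_{n+1}]$ and the variety $V(h,x_{n+1}-f)$ is taken inside $\R^{n+1}$; this is exactly what makes the substitution $x_{n+1} \mapsto f(x)$ in Step 2 legitimate.
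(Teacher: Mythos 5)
Your proof is correct and follows essentially the same route as the paper's: the paper writes a short chain of equivalences ($t\in f(V(h))\Leftrightarrow \exists a\in V(h),\ t=f(a)\Leftrightarrow (a,t)\in V(h,x_{n+1}-f)\Leftrightarrow t\in\hat\pi(\cdots)$), while you unfold the same bookkeeping into two explicit inclusions; the content is identical.
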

\begin{proof}
The result follows from the following equivalences:
\begin{equation}
\begin{array}{rl}
&a_{n+1}\in f(V(h))\\
\Leftrightarrow &\exists a\in V(h)\,:\, a_{n+1}=f(a)\\
\Leftrightarrow &\exists a\in \R^{n}\,:\, (a,a_{n+1})\in V(h,x_{n+1}-f)\\
\Leftrightarrow &a_{n+1}\in \hat \pi(V(h,x_{n+1}-f))\,.\\
\end{array}
\end{equation}
\end{proof}
\begin{lemma}\label{lem:dimequal}
Let $A$ be a closed semi-algebraic subset of $\R^n$ and $f$ be a polynomial in $\R[x]$. Then $\dim(f(A))=\dim(Z(f(A)))$.
Moreover, if $f(A)$ is finite, then $f(A)=Z(f(A))$.
\end{lemma}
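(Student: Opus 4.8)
The plan is to push the problem down to the real line and exploit the very rigid structure of one-dimensional semi-algebraic sets. First I would invoke Lemma~\ref{lem:properties.semi-algebraic.set}(1) to get that $f(A)$ is a semi-algebraic subset of $\R$, and then Lemma~\ref{lem:properties.semi-algebraic.set}(2) to write $f(A)$ as a finite union of intervals and points of $\R$. From here the natural dichotomy is between $f(A)$ finite and $f(A)$ infinite, and I would treat these two cases separately.

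In the finite case, write $f(A)=\{a_1,\dots,a_k\}$ (the case $f(A)=\emptyset$ being immediate, since then $Z(f(A))=\emptyset$ as well). I would exhibit $f(A)$ as a real variety by setting $q:=\prod_{i=1}^{k}(x_{n+1}-a_i)\in\R[x_{n+1}]$, so that $f(A)=V(q)$. Because $Z(\cdot)$ is, by definition, the smallest real variety containing its argument and $f(A)$ is already such a variety, we obtain $Z(f(A))=f(A)$. This is precisely the ``moreover'' assertion, and it forces $\dim(f(A))=\dim(Z(f(A)))$ (both equal $0$ when $f(A)\neq\emptyset$ under the submanifold definition of dimension).

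In the infinite case, the decomposition supplied by Lemma~\ref{lem:properties.semi-algebraic.set}(2) must contain at least one nondegenerate interval, so $f(A)$ contains an open interval of $\R$ and hence $\dim(f(A))=1$. For the Zariski closure I would argue that no nonzero univariate polynomial can vanish on the infinite set $f(A)$, since a nonzero element of $\R[x_{n+1}]$ has only finitely many roots; therefore $I(f(A))=\{0\}$, and \eqref{eq:zariski.def} gives $Z(f(A))=V(\{0\})=\R$, whose dimension is $1$. Hence $\dim(f(A))=\dim(Z(f(A)))$ in this case too, which finishes the proof.

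The argument is essentially bookkeeping once the structure theorem for semi-algebraic subsets of $\R$ is in hand; the only point that deserves a moment of care is that an \emph{infinite} semi-algebraic subset of $\R$ cannot be an infinite discrete set and must instead contain a nondegenerate interval, and this is exactly what Lemma~\ref{lem:properties.semi-algebraic.set}(2) guarantees. I note in passing that the hypothesis that $A$ is closed is not used anywhere in this proof; it is harmless to retain it in the statement.
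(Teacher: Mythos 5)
Your proof is correct and follows essentially the same route as the paper: reduce to the structure of semi-algebraic subsets of $\R$ via Lemma~\ref{lem:properties.semi-algebraic.set}, then split into a finite case (where $f(A)$ is itself a variety, namely the zero set of $\prod_i(x_{n+1}-a_i)$) and an infinite case (where the set contains a nondegenerate interval, no nonzero univariate polynomial can vanish on it, and so $Z(f(A))=\R$). The paper phrases the dichotomy as ``nonempty interior vs.\ finite'' while you phrase it as ``infinite vs.\ finite,'' but these coincide for finite unions of intervals and points, and your write-up simply makes explicit the steps the paper labels ``easily seen''; your observation that the closedness of $A$ is not actually used is also accurate.
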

\begin{proof} 
By Lemma \ref{lem:properties.semi-algebraic.set}, $f(A)$  is a finite union of intervals and points in $\R$.
Then one of the following two cases occurs:
\begin{itemize}
\item Case 1: $f(V(h))$ has non-empty interior. Then $f(V(h))$ has dimension one. We also get $Z(f(A))=\R$, which has dimension one. It implies that $\dim(f(A))=\dim(Z(f(A)))=1$.
\item Case 2: $f(V(h))$ is finite. Then it is easily seen that $f(A)=Z(f(A))$, which gives $\dim(f(A))=\dim(Z(f(A)))=0$.
\end{itemize}
Hence the result follows.
\end{proof}

\begin{remark}
The conclusion of Lemma \ref{lem:dimequal} still holds if we assume that $f$ is a semi-algebraic function (see \cite[Section 2.2.1]{coste2000introduction}).
\end{remark}

\subsection{Sets of critical points}
Given $g_1,\dots,g_m\in\R[x]$, let  $\varphi^g:\R^{n}\to \R^{(n+m)\times m}$ be a function associated with $g=(g_1,\dots,g_m)$ defined by
\begin{equation}
\varphi^g(x)=\begin{bmatrix}
\nabla g(x)\\
\diag(g(x))
\end{bmatrix}=
    \begin{bmatrix}
    \nabla g_1(x)& \dots& \nabla g_m(x)\\
    g_1(x)&\dots&0\\
    .&\dots&.\\
    0&\dots&g_m(x)
    \end{bmatrix}\,.
\end{equation}
Given a real matrix $A$, we denote by $\rank(A)$ the dimension of the vector space generated by the columns of $A$ over $\R$.
We say that a set $\Omega$ is finite if its cardinality is a non-negative integer.
Let  $C(g)$ be the set of critical points associated with $g$ defined by
\begin{equation}\label{eq:crit.point}
    C(g):=\{x\in\R^n\,:\,\rank(\varphi^g(x))< m\}.
\end{equation}
Given a real matrix $A$, we denote by $\rank^+(A)$ the largest number of columns of $A$ whose convex hull over $\R$ has no zero.
Let  $C^+(g)$ be the set of critical points associated with $g$ defined by
\begin{equation}\label{eq:crit.point.plus}
    C^+(g):=\{x\in \R^n\,:\,\rank^+(\varphi^g(x))< m\}.
\end{equation}
\begin{remark}
Let $\Phi^g:\R^{n+m}\to\R$ be a mapping defined by 
\begin{equation}
\Phi^g(x,y)=(g_1(x)e^{y_1},\dots,g_m(x)e^{y_m})\,.
\end{equation}
Then $\Phi^g$ is differentiable and the Jacobian of $\Phi^g$ is of the form
\begin{equation}
\nabla \Phi^g(x,y)=\begin{bmatrix}
    e^{y_1}\nabla g_1(x)& \dots& e^{y_m}\nabla g_m(x)\\
    e^{y_1}g_1(x)&\dots&0\\
    .&\dots&.\\
    0&\dots&e^{y_m}g_m(x)
    \end{bmatrix}=\varphi^g(x) \times \begin{bmatrix}
    e^{y_1}& \dots& 0\\
    .&\dots&.\\
    0&\dots&e^{y_m}
    \end{bmatrix}\,.
\end{equation}
It implies that $\rank(\nabla \Phi^g(x,y))=\rank(\varphi^g(x))$ since $e^{y_j}\ne 0$.
Note that the standard set of critical points of $\Phi^g$ is defined by 
\begin{equation}
C_{\Phi^{g}}=\{(x,y)\in\R^{n+m}\,:\,\rank(\nabla \Phi^g(x,y))< m\}\,.
\end{equation}
From this, we get $C_{\Phi^{g}}=C(g)\times \R^m$.
Thus our set of critical points $C(g)$ is the projection of $C_{\Phi^{g}}$ on the linear space spanned by the first $n$ coordinates.
\end{remark}
The following lemma relies on the proofs of non-negativity certificates stated in our previous works \cite{mai2022exact,mai2022explicit} and Demmel--Nie--Powers' work \cite{demmel2007representations}:
\begin{lemma}\label{lem:image.finite}
Let $f\in\R[x]$, $g=(g_1,\dots,g_m)$ with $g_j\in\R[x]$.
Let $h_{\FJ}$, $h_{\FJ}^+$, $h_{\KKT}$, $h_{\KKT}^+$ be defined as in \eqref{eq:.polyFJ}, \eqref{eq:.polyFJ.plus}, \eqref{eq:.polyKKT}, \eqref{eq:.polyKKT.plus}, respectively.
Assume that problem \eqref{eq:pop} has an optimal solution $x^\star$.
Then the following statements hold:
\begin{enumerate}
\item If $f(S(g)\cap C(g))$ is finite, $f((S(g)\times \R^{m+1})\cap V(h_{\FJ}))$ is non-empty and finite.
\item If $f(S(g)\cap C^+(g))$ is finite, $f((S(g)\times \R^{m+1})\cap V(h_{\FJ}^+))$ is non-empty and finite.
\item If the Karush--Kuhn--Tucker conditions hold for problem \eqref{eq:pop} at $x^\star$, 
\begin{equation}
f((S(g)\times \R^{m})\cap V(h_{\KKT}))\quad\text{ and }\quad f((S(g)\times \R^{m})\cap V(h_{\KKT}^+))
\end{equation}
are non-empty and finite.
\end{enumerate}
\end{lemma}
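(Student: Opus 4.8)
The plan is to treat non-emptiness and finiteness separately. For finiteness, I would split each of the sets $W_s$ into a \emph{singular} part, on which the structure of the Fritz John multipliers forces the $x$-coordinate into $C(g)$ or $C^+(g)$ and which is therefore controlled by the hypotheses, and a \emph{regular} part consisting of Karush--Kuhn--Tucker points, on which $f$ takes finitely many values by a constrained critical point argument -- this being exactly the mechanism behind the analyses of the KKT ideal in \cite{demmel2007representations} and of the Fritz John ideal in \cite{mai2022exact,mai2022explicit}.

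For non-emptiness, since $x^\star$ solves \eqref{eq:pop} it is in particular a local minimizer, so the Fritz John conditions hold at $x^\star$; by the equivalences in \eqref{eq:FJcond} there exist $\bar\lambda,\bar\mu\in\R^{m+1}$ with $(x^\star,\bar\lambda)\in V(h_{\FJ})$ and $(x^\star,\bar\mu)\in V(h_{\FJ}^+)$, and since $x^\star\in S(g)$ we get $(x^\star,\bar\lambda)\in W_1$ and $(x^\star,\bar\mu)\in W_2$, whence $f^\star\in f(W_1)\cap f(W_2)$. Under the hypothesis of the third statement, the same reasoning applied to \eqref{eq:KKT.cond} yields points of $W_3$ and $W_4$ over $x^\star$, so $f^\star\in f(W_3)\cap f(W_4)$; in particular the four images are non-empty.

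For finiteness I would first dispose of the singular part. If $(u,\bar\lambda)\in W_1$ and $\lambda_0=0$, then $\sum_{j=1}^m\lambda_j\nabla g_j(u)=0$, $\lambda_jg_j(u)=0$ for all $j$, and $\sum_{j=1}^m\lambda_j^2=1$, so $(\lambda_1,\dots,\lambda_m)$ is a non-zero vector in $\ker\varphi^g(u)$; hence $\rank(\varphi^g(u))<m$, i.e.\ $u\in S(g)\cap C(g)$ and $f(u)\in f(S(g)\cap C(g))$. Similarly, if $(u,\bar\lambda)\in W_2$ and $\lambda_0=0$, then $(\lambda_1^2,\dots,\lambda_m^2)$ is a non-zero non-negative vector in $\ker\varphi^g(u)$ with entries summing to $1$, so $0$ is a convex combination of the columns of $\varphi^g(u)$, whence $\rank^+(\varphi^g(u))<m$ and $f(u)\in f(S(g)\cap C^+(g))$; both images are finite by assumption. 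On the part of $W_1$, $W_2$ where $\lambda_0\ne0$, and on all of $W_3$ and $W_4$, one obtains after an obvious rescaling a point $u\in S(g)$ with $\nabla f(u)=\sum_j\mu_j\nabla g_j(u)$ and $\mu_jg_j(u)=0$; writing $A$ for the active index set at $u$, one has $\mu_j=0$ for $j\notin A$, so $\nabla f(u)$ is a linear combination of the gradients $\nabla g_j(u)$, $j\in A$, and $u$ belongs to the semi-algebraic set $\mathcal D_A$ of all $v$ with $g_j(v)=0$ for $j\in A$ and $\nabla f(v)$ a linear combination of the $\nabla g_j(v)$, $j\in A$. Along any piecewise-$C^1$ semi-algebraic arc $\gamma$ inside $\mathcal D_A$ the functions $g_j\circ\gamma$ ($j\in A$) are identically zero, so at each parameter value the derivative of $f\circ\gamma$, which is the corresponding linear combination of the (vanishing) derivatives of the $g_j\circ\gamma$, is zero; hence $f$ is constant on $\gamma$. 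Since $\mathcal D_A$ has finitely many semi-algebraically path-connected components and there are finitely many index sets $A$, $f$ takes finitely many values on the regular part. Combining the two parts gives the finiteness of $f(W_s)$ in all four cases.

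The main obstacle is the regular part -- making the stratification and the constrained critical point argument precise, i.e.\ showing that a polynomial attains finitely many critical values on the real varieties $\mathcal D_A$ -- which is exactly the technical content I would import from \cite{demmel2007representations,mai2022exact,mai2022explicit}. A secondary point worth isolating is that only the $f$-images $f(S(g)\cap C(g))$ and $f(S(g)\cap C^+(g))$ enter the proof, not the finiteness of $C(g)$ or $C^+(g)$ themselves, so the lemma holds under the weaker hypotheses exactly as stated.
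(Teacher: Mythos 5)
Your proof is correct and follows the same strategy as the paper's own sketch: split each $W_s$ along $\lambda_0=0$, control the singular ($\lambda_0=0$) part via the finiteness hypothesis on $f(S(g)\cap C(g))$ resp.\ $f(S(g)\cap C^+(g))$, and show $f$ is locally constant on the remainder by a mean-value-theorem argument along semi-algebraic arcs, with non-emptiness obtained by instantiating the FJ/KKT conditions at $x^\star$. The one organizational difference is that you stratify the regular part by active index set $A$ into the sets $\mathcal D_A$ before passing to connected components, which makes the derivative-vanishes step immediate (since $g_j\circ\gamma\equiv0$ for all $j\in A$, $\gamma'(t)$ is orthogonal to $\mathrm{span}\{\nabla g_j(\gamma(t)):j\in A\}\ni\nabla f(\gamma(t))$); the paper instead applies the mean value theorem to connected components of $W_s$ directly and defers to \cite{demmel2007representations,mai2022exact,mai2022explicit} the almost-everywhere bookkeeping needed when the active set changes along an arc. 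Both routes yield the same conclusion, and your stratification is, if anything, slightly cleaner to carry out in a self-contained way; you are not missing anything.
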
 
\begin{remark}
We call $f(S(g)\cap C(g))$ and $f(S(g)\cap C^+(g))$ in Lemma \ref{lem:image.finite} the sets of critical values of $f$ on $S(g)$.
Studies of similar types to these critical values are found in \cite{guo2010global,faugere2012grobner,
berthomieu2022grobner,berthomieu2021towards,
berthomieu2022computing,berthomieu2021computation}.
\end{remark}
Note that the if-clauses in the three statements of Lemma \ref{lem:image.finite} hold generically according to \cite[Theorem 2]{mai2022exact}.
To prove the first statement of Lemma \ref{lem:image.finite}, we decompose the intersection $S(g) \cap V(h_{\FJ})$ into finitely many connected components and claim that the polynomial $f$ is constant on each connected component. 
We prove this by considering connected components not contained in the hyperplane $\lambda_0=0$ and then applying the mean value theorem.
The remaining case relies on the assumption that $f(S(g)\cap C(g))$ is finite. 
Similar arguments apply to the other statements of Lemma \ref{lem:image.finite}.
\begin{remark}
In Lemma \ref{lem:image.finite}, by adding polynomial equality constraints $h_\text{2ndFJ}\supset h_{\FJ}$ (resp. $h_\text{2ndKKT}\supset h_{\KKT}$) from the Fritz John (resp. Karush--Kuhn--Tucker) second-order necessary conditions (see, e.g., \cite[Section 2.4.1]{kungurtsev2013second}), we can reduce the size of the image under $f$  that contains $f^\star$, i.e., $f((S(g)\times \R^{m+1})\cap V(h_{\FJ}))\supset f((S(g)\times \R^{m+1})\cap V(h_\text{2ndFJ}))$ (resp. $f((S(g)\times \R^{m})\cap V(h_{\KKT}))\supset f((S(g)\times \R^{m})\cap V(h_\text{2ndKKT}))$).
\end{remark}
\paragraph{Additional discussion for our previous works.}
In our previous works \cite{mai2022exact,mai2022explicit}, we use the finiteness of the images of $S(g) \cap V(h_{\FJ})$ and $S(g) \cap V(h_{\FJ}^+)$ under $f$ to construct representations of $f-f^\star$ without denominators involving quadratic modules and preorderings associated with these two intersections. 
Here $f^\star$ is defined as in \eqref{eq:pop}.
These non-negativity certificates allow us to tackle the following two cases: (i) polynomial $f-f^\star$ is non-negative with infinitely many zeros on basic semi-algebraic sets $S(g)$ and (ii) the Karush–Kuhn–Tucker conditions do not hold for problem \eqref{eq:pop} at any zero of $f-f^\star$ on $S(g)$.

\paragraph{Singularities of semi-algebraic sets.} We call $\bar x\in S(g)$ a singularity of $S(g)$ if the Fritz John conditions \eqref{eq:FJcond} hold for the problem \eqref{eq:pop} at $\bar x$  without a positive multiplier on the objective gradient (i.e., $\lambda_0=0$).
In this case, the Karush–Kuhn–Tucker conditions do not hold for the problem \eqref{eq:pop} at $\bar x$, which implies that $\bar x\in C^+(g)$ and $\bar x\in C(g)$.
Previously Guo, Wang, and Zhou \cite{guo2014minimizing} have extended Nie's method \cite{nie2013exact} to rational objective functions and a finite number of singularities.
While their method is inapplicable to \cite[Example 5]{mai2022exact} because of its infinite number of singularities, our method applies to this example.
It is because we only use the finiteness assumption of the image of the singularities of $S(g)$ under $f$ that still includes the case of the infinite number of singularities.

\paragraph{Computational limitation of our previous method.} Based on the representations in our previous work \cite{mai2022exact,mai2022explicit}, we utilize Lasserre's hierarchy of semidefinite relaxations \cite{lasserre2001global} to provide a sequence of values finitely converges to the optimal value of a polynomial optimization problem with singularities.
However, we only obtain approximate (non-exact) values with low accuracy when using a numerical method to solve the corresponding semidefinite relaxations.
As shown in Section \ref{sec:examples}, we can get the exact optimal values using our new method in this paper.
\section{Zariski closure of the image of a real variety under a polynomial}
\label{sec:image.variety}
The following algorithm allows us to obtain univariate polynomials whose set of common real zeros is identically the Zariski closure of the image of a given real variety under a given polynomial:
\begin{algorithm}\label{alg:image}
Computing the Zariski closure of the image of a real variety under a polynomial:
\begin{itemize}
\item Input: $f\in\R[x]$ and $h=(h_1,\dots,h_l)$ with $h_j\in\R[x]$.
\item Output: $\hat h=(\hat h_1,\dots,\hat h_r)$ with $\hat h_t\in\R[x_{n+1}]$.
\end{itemize}
\begin{enumerate}
\item Compute a generator $\check h\subset \R[x,x_{n+1}]$ of the real radical $\sqrt[\R]{I(h,x_{n+1}-f)}$.  
\item Compute a Gr\"obner basis $h^G$ of $I(\check h)$ w.r.t. lex order where $x_1\succ\dots\succ x_n\succ x_{n+1}$.
\item Set $\hat h:=h^G\cap \R[x_{n+1}]$.
\end{enumerate}
\end{algorithm}
\begin{remark}\label{rem:bound.image.variety}
Let $d$ be the largest degree of $f,h_j$ in the input of Algorithm \ref{alg:image}.
In the first step of this algorithm, we  utilize Becker--Neuhaus's method in \cite{becker1993computation,neuhaus1998computation} to compute real radical generators (see Remark \ref{rem:radical.gener}). 
Then each element of $\check h$ has degree upper bounded by
$d^{2^{O(n^2)}}$.
The second step of Algorithm \ref{alg:image} requires the computation of a Gr\"obner basis, which can be handled by using Buchberger's algorithm (see Remark \ref{rem:bound.Grobner}).
Then polynomials in $h^G$ have degrees not larger than 
\begin{equation}
2\left({\frac {1}{2}}\times d^{2\times 2^{O(n^2)}}+d^{2^{O(n^2)}}\right)^{2^{n-1}}\,.
\end{equation}
Thus the degree of each $\hat h_t$ is bounded from above by $2\times d^{2^{{O(n^2)}^{2^{n-1}}}}$.
It is open to reducing this degree bound.
\end{remark}

We apply the elimination theorem to prove the following lemma:
\begin{lemma}\label{lem:algimag.well}
Let $f\in\R[x]$ and $h=(h_1,\dots,h_l)$ with $h_j\in\R[x]$.
Let $\hat h=(\hat h_1,\dots,\hat h_r)$ with $\hat h_t\in\R[x_{n+1}]$ be the output of Algorithm \ref{alg:image} with input $f,h$.
Then $V(\hat h)=Z(f(V(h)))$.
\end{lemma}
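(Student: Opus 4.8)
The plan is to chain together three equalities: $V(\hat h) = V\!\bigl(I(\check h) \cap \R[x_{n+1}]\bigr)$, then $V\!\bigl(I(\check h) \cap \R[x_{n+1}]\bigr) = \hat\pi\bigl(V(\check h)\bigr)$, and finally $\hat\pi\bigl(V(\check h)\bigr) = Z(f(V(h)))$. The first equality is the only place the elimination theorem enters: since $h^G$ is a Gr\"obner basis of $I(\check h)$ with respect to lex order $x_1 \succ \cdots \succ x_n \succ x_{n+1}$, the cited elimination lemma gives that $\hat h = h^G \cap \R[x_{n+1}]$ is a Gr\"obner basis — hence in particular a generator — of the elimination ideal $I(\check h) \cap \R[x_{n+1}]$, so $V(\hat h) = V\bigl(I(\check h)\cap\R[x_{n+1}]\bigr)$.

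Next I would establish $V\bigl(I(\check h)\cap\R[x_{n+1}]\bigr) = \hat\pi\bigl(V(\check h)\bigr)$. Here $\hat\pi\bigl(V(\check h)\bigr) = \{a_{n+1}\in\R : \exists a\in\R^n,\ (a,a_{n+1})\in V(\check h)\}$. The inclusion $\hat\pi(V(\check h)) \subseteq V\bigl(I(\check h)\cap\R[x_{n+1}]\bigr)$ is immediate, since any $p\in I(\check h)\cap\R[x_{n+1}]$ vanishes on $V(\check h)$ and hence on its projection. For the reverse inclusion one cannot invoke the complex closure theorem, so I would instead use the key fact that $\check h$ generates a \emph{real radical} ideal: by \eqref{eq:real.radi}, $I(\check h)$ equals $\sqrt[\R]{I(\check h)} = \{p : p = 0 \text{ on } V(\check h)\} = I(V(\check h))$, i.e.\ $I(\check h)$ is the vanishing ideal of the real variety $V(\check h)$. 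Then $I(\check h)\cap\R[x_{n+1}]$ is precisely the set of univariate polynomials vanishing on $V(\check h)$, equivalently vanishing on $\hat\pi(V(\check h))$; since $\hat\pi(V(\check h))$ is a subset of $\R$, a univariate polynomial vanishing on it either is the zero polynomial (if the set is infinite) or the product of the linear forms $x_{n+1} - c$ over $c\in\hat\pi(V(\check h))$ (if finite), and in either case the common zero set of all such polynomials is exactly $\hat\pi(V(\check h))$. This gives $V\bigl(I(\check h)\cap\R[x_{n+1}]\bigr) = \hat\pi(V(\check h))$.

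For the last equality, note that taking real radicals does not change the real variety: $V(\check h) = V\bigl(\sqrt[\R]{I(h,x_{n+1}-f)}\bigr) = V(h, x_{n+1} - f)$ by \eqref{eq:real.radi}. Applying Lemma \ref{lem:project} with this $h$ and $p = x_{n+1} - f$ then yields $\hat\pi\bigl(V(h, x_{n+1}-f)\bigr) = f(V(h))$. So far the chain gives $V(\hat h) = f(V(h))$, but the claim is $V(\hat h) = Z(f(V(h)))$. To close this gap I would observe that $V(\hat h)$ is by construction a real variety (a common zero set of polynomials), so since $Z(f(V(h)))$ is the smallest real variety containing $f(V(h))$, we get $Z(f(V(h))) \subseteq V(\hat h)$; conversely every element of $\hat h$ lies in $I(\check h)\cap\R[x_{n+1}] = I(V(\check h))\cap\R[x_{n+1}] = I(f(V(h)))$, hence vanishes on $f(V(h))$, hence vanishes on $Z(f(V(h)))=V(I(f(V(h))))$ by \eqref{eq:zariski.def}, giving $V(\hat h)\supseteq Z(f(V(h)))$. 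Combining the two inclusions finishes the proof.

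The main subtlety — the step I expect to need the most care — is the reverse projection inclusion $V\bigl(I(\check h)\cap\R[x_{n+1}]\bigr) \subseteq \hat\pi(V(\check h))$, because over $\R$ (unlike over $\C$) this fails for a general ideal, and it is exactly the real-radical computation in Step 1 of Algorithm \ref{alg:image} that rescues it: without the passage to $\sqrt[\R]{I(h,x_{n+1}-f)}$, the elimination ideal could have common zeros that are not projections of real points of $V(h,x_{n+1}-f)$. I would make sure to state this dependence explicitly, invoking \eqref{eq:real.radi} to identify $I(\check h)$ with the vanishing ideal $I(V(\check h))$, which is what makes the elimination ideal behave like a genuine "image ideal" in the real setting. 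Everything else is bookkeeping with the definitions of $Z(\cdot)$ and $V(\cdot)$ and two applications of Lemma \ref{lem:project} and \eqref{eq:real.radi}.
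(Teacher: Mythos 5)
There is a genuine error in your second "equality." You claim $V\bigl(I(\check h)\cap\R[x_{n+1}]\bigr) = \hat\pi(V(\check h))$, but this fails whenever $\hat\pi(V(\check h))$ is infinite yet not all of $\R$. Your own case analysis reveals the problem: if $\hat\pi(V(\check h))$ is infinite (say an interval $[-1,1]$), the only univariate polynomial vanishing on it is the zero polynomial, so $I(\check h)\cap\R[x_{n+1}]=\{0\}$ and $V\bigl(\{0\}\bigr)=\R\neq[-1,1]$. Concretely, take $\check h=(x_1^2+x_2^2-1)$ and project to $x_2$: the projection is $[-1,1]$ but the elimination ideal is trivial. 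The real radical hypothesis does not rescue this — even the vanishing ideal of a real variety eliminates down to $V$ of the \emph{Zariski closure} of the projection, not the projection itself. The correct middle identity is $V\bigl(I(\check h)\cap\R[x_{n+1}]\bigr)=Z\bigl(\hat\pi(V(\check h))\bigr)$. Because of this, your "chain gives $V(\hat h)=f(V(h))$" line is false, and the internal tension you then feel (and try to patch) is a symptom of this error rather than an independent gap.

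The patch at the end also does not close the argument: both paragraphs there establish only the inclusion $Z(f(V(h)))\subseteq V(\hat h)$. The first paragraph needs $f(V(h))\subseteq V(\hat h)$ as an unstated input (which is true, but you should say so), and the second paragraph — "every element of $\hat h$ vanishes on $Z(f(V(h)))$" — again yields $Z(f(V(h)))\subseteq V(\hat h)$, not the reverse. The reverse inclusion $V(\hat h)\subseteq Z(f(V(h)))$ is never proved.

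You actually have all the needed pieces and the fix is short. In your step 1 you correctly note that, by the elimination theorem, $\hat h$ \emph{generates} $I(\check h)\cap\R[x_{n+1}]$. In your "conversely" sentence you correctly compute, using the real radical property \eqref{eq:real.radi} and Lemma~\ref{lem:project}, that
\begin{equation}
I(\check h)\cap\R[x_{n+1}]=I(V(\check h))\cap\R[x_{n+1}]=I\bigl(\hat\pi(V(\check h))\bigr)=I\bigl(f(V(h))\bigr)\,.
\end{equation}
Chaining these two facts, $I(\hat h)=I(\check h)\cap\R[x_{n+1}]=I\bigl(f(V(h))\bigr)$, so
\begin{equation}
V(\hat h)=V\bigl(I(\hat h)\bigr)=V\bigl(I(f(V(h)))\bigr)=Z\bigl(f(V(h))\bigr)
\end{equation}
by \eqref{eq:zariski.def}. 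This is the same route the paper takes (its string of equivalences just walks pointwise through exactly this ideal-level identification); the difference is that the paper never passes through the false set-level equality $V\bigl(I(\check h)\cap\R[x_{n+1}]\bigr)=\hat\pi(V(\check h))$, which is where your write-up goes wrong.
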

\begin{proof}
By the first step of Algorithm \ref{alg:image}, it holds that $I(\check h)=\sqrt[\R]{I(h,x_{n+1}-f)}$.
From this, we get 
\begin{equation}\label{eq:equalities.varieties}
V(\check h)=V(h,x_{n+1}-f)=V(h^G)\,.
\end{equation}
since  $h^G$ is a Gr\"obner basis of $I(\check h)$.
Moreover, the elimination theorem (see, e.g., \cite[Theorem 2, Section 1, Chapter 3]{cox2013ideals}) yields that $\hat h$ is a Gr\"obner basis of the ideal $I(\check h)\cap \R[x_{n+1}]$.
Let $\hat \pi$ be as in \eqref{eq:proj}.
The result follows from the following equivalences:
\begin{equation}\label{eq:equivalences}
    \begin{array}{rl}
&  t\in V(\hat h)\\
\Leftrightarrow & \forall p\in I(\check h)\cap  \R[x_{n+1}]\,:\,p(t)=0\\
\Leftrightarrow & \forall p\in \R[x_{n+1}]\,:\,p\in I(\check h) \Rightarrow p(t)=0\\
\Leftrightarrow & \forall p\in \R[x_{n+1}]\,:\,(p=0\text{ on } V(\check h)) \Rightarrow p(t)=0\\
\Leftrightarrow & \forall p\in \R[x_{n+1}]\,:\,(p=0\text{ on } V(h,x_{n+1}-f)) \Rightarrow p(t)=0\\
\Leftrightarrow & \forall p\in \R[x_{n+1}]\,:\,(p=0\text{ on } \hat \pi(V(h,x_{n+1}-f))) \Rightarrow p(t)=0\\
\Leftrightarrow & \forall p\in \R[x_{n+1}]\,:\,(p=0\text{ on } f(V(h))) \Rightarrow p(t)=0\\
    \Leftrightarrow & \forall p\in I(f(V(h))) \Rightarrow p(t)=0\\
   \Leftrightarrow & t\in V(I(f(V(h))))\\
   \Leftrightarrow & t\in Z(f(V(h)))\,.
    \end{array}
\end{equation}
The sixth equivalence relies on Lemma \ref{lem:project}.
The final equivalence is based on \eqref{eq:zariski.def}.
\end{proof}

\begin{remark}
If we replace Step 1 of Algorithm \ref{alg:image} with ``Set $\check h:=(h,x_{n+1}-f)$.'', then $V(\hat h)\subset Z(f(V(h)))$.
To prove this, we process similarly to the proof of Lemma \ref{lem:algimag.well} and replace the third equivalence in \eqref{eq:equivalences} with an implication.
\end{remark}
The following algorithm enables us to compute the dimension of the image of a given variety under a given polynomial:
\begin{algorithm}\label{alg:dim}
Computing the dimension of the image of a real variety under a polynomial:
\begin{itemize}
\item Input: $f\in\R[x]$ and $h=(h_1,\dots,h_l)$ with $h_j\in\R[x]$.
\item Output: $d\in\N$.
\end{itemize}
\begin{enumerate}
\item Compute $\hat h=(\hat h_1,\dots,\hat h_r)$ with $\hat h_t\in\R[x_{n+1}]$ such that $V(\hat h)=Z(f(V(h)))$ by using Algorithm \ref{alg:image}.  
\item Set $d:=\dim(V(\hat h)))$.
\end{enumerate}
\end{algorithm}
\begin{remark}
In the second step of Algorithm \ref{alg:dim}, if $V(\hat h)\ne \R$, then $V(\hat h)$ has finitely many reals, which implies $d=0$.
In particular, if $V(\hat h)=\emptyset$ (e.g., $\hat h=\{1\}$), then $d=0$.
Moreover, if $\hat h=\emptyset$ or $\hat h=\{0\}$, then $V(\hat h)=\R$ and $d=1$.
\end{remark}

The following lemma is a direct consequence of Lemmas \ref{lem:dimequal} and \ref{lem:algimag.well}:
\begin{lemma}\label{lem:alg:dim}
Let $f\in\R[x]$ and $h=(h_1,\dots,h_l)$ with $h_j\in\R[x]$.
Let $d$ be the output of Algorithm \ref{alg:dim} with input $f$ and $h$.
Then $d$ is the dimension of the image $f(V(h))$.
\end{lemma}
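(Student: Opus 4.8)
The plan is to chain together the two preceding results directly, since Lemma~\ref{lem:alg:dim} is exactly the assertion that Algorithm~\ref{alg:dim} is correct, and Algorithm~\ref{alg:dim} was built by post-composing Algorithm~\ref{alg:image} with the operation $\dim V(\cdot)$. So first I would unwind the definition of the output $d$: by Step~1 of Algorithm~\ref{alg:dim}, the tuple $\hat h=(\hat h_1,\dots,\hat h_r)$ computed there is precisely the output of Algorithm~\ref{alg:image} on input $f,h$, and by Step~2 we have $d=\dim(V(\hat h))$.

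Next I would invoke Lemma~\ref{lem:algimag.well}, which gives $V(\hat h)=Z(f(V(h)))$. Substituting, $d=\dim\bigl(Z(f(V(h)))\bigr)$. Then I would apply Lemma~\ref{lem:dimequal}: since $V(h)$ is a (closed) semi-algebraic subset of $\R^n$ and $f\in\R[x]$, that lemma yields $\dim(f(V(h)))=\dim(Z(f(V(h))))$. Combining the two displayed equalities gives $d=\dim(f(V(h)))$, which is the claim. I would write this as a short two- or three-line computation, something like $d=\dim(V(\hat h))=\dim(Z(f(V(h))))=\dim(f(V(h)))$, annotating each step with the result used.

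There is essentially no obstacle here — the lemma is stated as ``a direct consequence'' of the two cited lemmas, and the proof is just bookkeeping. The only points that merit a sentence of care are the hypotheses of Lemma~\ref{lem:dimequal}: one must note that a real variety $V(h)$ is closed (it is the zero set of finitely many continuous polynomials, hence closed in the Euclidean topology) and is semi-algebraic (being defined by polynomial equalities), so that $V(h)$ qualifies as the set $A$ in Lemma~\ref{lem:dimequal}. With that observation in place the argument is complete.

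\begin{proof}
By Step~1 of Algorithm~\ref{alg:dim}, the tuple $\hat h=(\hat h_1,\dots,\hat h_r)$ used there is the output of Algorithm~\ref{alg:image} with input $f$ and $h$, and by Step~2 we have $d=\dim(V(\hat h))$. Note that $V(h)$ is a closed semi-algebraic subset of $\R^n$, since it is the common zero set of the finitely many polynomials $h_1,\dots,h_l$. Hence, applying Lemma~\ref{lem:algimag.well} and then Lemma~\ref{lem:dimequal} (with $A=V(h)$), we obtain
\begin{equation}
d=\dim(V(\hat h))=\dim\bigl(Z(f(V(h)))\bigr)=\dim\bigl(f(V(h))\bigr)\,,
\end{equation}
as desired.
\end{proof}
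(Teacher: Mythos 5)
Your proof is correct and follows exactly the route the paper intends: the paper states that Lemma~\ref{lem:alg:dim} is a direct consequence of Lemmas~\ref{lem:dimequal} and~\ref{lem:algimag.well}, and your chain $d=\dim(V(\hat h))=\dim(Z(f(V(h))))=\dim(f(V(h)))$ is precisely that, with the helpful added observation that $V(h)$ is a closed semi-algebraic set so the hypotheses of Lemma~\ref{lem:dimequal} apply.
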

\begin{remark}
To compute the dimension of a general real algebraic variety, we refer the readers to the work of Lairez and Safey El Din in \cite{lairez2021computing}.
\end{remark}

\section{Zariski closure of the image of a basic semi-algebraic set under a polynomial}
\label{sec:image.semial}
The following algorithm produces a finite number of univariate polynomials whose set of common real zeros is identically the Zariski closure of the image of a given basic semi-algebraic set under a given polynomial:
\begin{algorithm}\label{alg:image.semi}
Computing the Zariski closure of the image of a basic semi-algebraic set under a polynomial:
\begin{itemize}
\item Input: $f\in\R[x]$, $g=(g_1,\dots,g_m)$ and $h=(h_1,\dots,h_l)$ with $g_i,h_j\in\R[x]$.
\item Output: $\hat h=(\hat h_1,\dots,\hat h_r)$ with $\hat h_t\in\R[x_{n+1}]$.
\end{itemize}
\begin{enumerate}
\item Set $\tilde h=(g_1-y_1^2,\dots,g_m-y_m^2,h)$ with $y=(y_1,\dots,y_m)$ being vector of variables independent from $x$.  
\item Compute $\hat h=(\hat h_1,\dots,\hat h_r)$ with $\hat h_t\in\R[x_{n+1}]$ such that $V(\hat h)=Z(f(V(\tilde h)))$ by using Algorithm \ref{alg:image}.
\end{enumerate}
\end{algorithm}
\begin{remark}\label{rem:complex.alg:image.semi}
By Remark \ref{rem:bound.image.variety}, the degree of each $\hat h_t$ in the output of Algorithm \ref{alg:image.semi} is bounded from above by 
\begin{equation}\label{eq:degree.bound.image.semi}
2\times d^{2^{{O((n+m)^2)}^{2^{n+m-1}}}}
\end{equation}
when $d$ is the upper bound on the degrees of $f,g_i,h_j$.
\end{remark}
\begin{lemma}\label{lem:algimag.well.semi}
Let $f\in\R[x]$, $g=(g_1,\dots,g_m)$ and $h=(h_1,\dots,h_l)$ with $g_i,h_j\in\R[x]$.
Let $\hat h=(\hat h_1,\dots,\hat h_r)$ with $\hat h_t\in\R[x_{n+1}]$ be the output of Algorithm \ref{alg:image.semi} with input $f,g,h$.
Then $V(\hat h)=Z(f(S(g)\cap V(h)))$.
\end{lemma}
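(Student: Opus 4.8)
The plan is to reduce the statement to Lemma \ref{lem:algimag.well} by identifying the image $f(V(\tilde h))$ with $f(S(g)\cap V(h))$, after which the claimed equality of Zariski closures is immediate. The first observation I would record is that, since the variables $y=(y_1,\dots,y_m)$ do not occur in $f$, evaluating $f$ on a point of $V(\tilde h)\subset\R^{n+m}$ only depends on its $x$-coordinates; concretely, $f(V(\tilde h))=f\bigl(\pi_x(V(\tilde h))\bigr)$, where $\pi_x\colon\R^{n+m}\to\R^n$ is the projection onto the first $n$ coordinates.

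Next I would verify that $\pi_x(V(\tilde h))=S(g)\cap V(h)$. For one inclusion, if $(x,y)\in V(\tilde h)$ then $g_j(x)=y_j^2\ge 0$ for every $j$ and $h_i(x)=0$ for every $i$, so $x\in S(g)\cap V(h)$. For the reverse inclusion, given $x\in S(g)\cap V(h)$, put $y_j:=\sqrt{g_j(x)}$, which is well defined precisely because $g_j(x)\ge 0$; then $g_j(x)-y_j^2=0$ and $h_i(x)=0$, so $(x,y)\in V(\tilde h)$. Combining this with the previous paragraph gives $f(V(\tilde h))=f(S(g)\cap V(h))$, and therefore $Z(f(V(\tilde h)))=Z(f(S(g)\cap V(h)))$.

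Finally, Step 2 of Algorithm \ref{alg:image.semi} runs Algorithm \ref{alg:image} on the input $(f,\tilde h)$, now in the polynomial ring $\R[x,y]$ with ambient variables $(x,y)\in\R^{n+m}$ (the auxiliary coordinate of Algorithm \ref{alg:image} playing the role of $x_{n+1}$ in this $(n+m)$-variable setting). By Lemma \ref{lem:algimag.well} its output satisfies $V(\hat h)=Z(f(V(\tilde h)))$. Chaining this with the identity of the previous paragraph yields $V(\hat h)=Z(f(S(g)\cap V(h)))$, which is the assertion of Lemma \ref{lem:algimag.well.semi}.

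I do not expect a genuine obstacle here: the argument is essentially the standard trick of encoding the inequalities $g_j\ge 0$ by the extra square equations $g_j=y_j^2$, together with the fact that this encoding leaves the value of $f$ unchanged. The only points requiring care are bookkeeping ones — ensuring Lemma \ref{lem:algimag.well} is invoked over $\R[x,y]$ rather than $\R[x]$, and keeping track of the projection $\pi_x$ so that the surjectivity direction (choosing $y_j=\sqrt{g_j(x)}$) is stated cleanly.
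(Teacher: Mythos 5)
Your proposal is correct and follows essentially the same route as the paper: both reduce to Lemma \ref{lem:algimag.well} applied to $(f,\tilde h)$ over $\R[x,y]$, and both justify $f(V(\tilde h))=f(S(g)\cap V(h))$ via the observation that the equations $g_j=y_j^2$ encode the inequalities $g_j\ge 0$ (with $y_j=\sqrt{g_j(x)}$ providing surjectivity). The paper writes this as a single chain of equivalences on $t\in f(V(\tilde h))$ rather than factoring through the projection $\pi_x$, but the content is identical.
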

\begin{proof}
The second step of Algorithm \ref{alg:image.semi} returns the correct result thanks to Lemma \ref{lem:algimag.well}.
It is sufficient to prove $f(V(\tilde h))=f(S(g)\cap V(h))$.
It follows from the following equivalences:
\begin{equation}
\begin{array}{rl}
&t\in f(V(\tilde h))\\
\Leftrightarrow & \exists (x,y)\in V(\tilde h)\,:\,t=f(x)\\
\Leftrightarrow & \exists (x,y)\in\R^{n+m}\,:\,t=f(x)\,,\,g_j(x)=y_j^2\,,\,h_i(x)=0\\
\Leftrightarrow & \exists x\in\R^{n}\,:\,t=f(x)\,,\,g_j(x)\ge 0\,,\,h_i(x)=0\\
\Leftrightarrow & \exists x\in S(g)\cap V(h)\,:\,t=f(x)\\
\Leftrightarrow&t\in f(S(g)\cap V(h))\,.
\end{array}
\end{equation}
\end{proof}
\begin{remark}\label{rem:extract.point}
If the intersection $S(g)\cap V(h)$ is finite, we can use Algorithm \ref{alg:image.semi} to find all points of this intersection as follows: 
\begin{enumerate}
\item Set $\Omega:=\emptyset$.
\item For $t=1,\dots,n$, compute $\hat h^{(t)}=(\hat h_1^{(t)},\dots,\hat h_{r_t}^{(t)})$ with $\hat h_w^{(t)}\in\R[x_{n+1}]$ such that $V(\hat h^{(t)})=Z(\pi_t(S(g)\cap V(h)))$ by using  Algorithm \ref{alg:image.semi}, where $\pi_t(x)=x_t$.
\item For every $a\in V(\hat h^{(1)})\times\dots\times V(\hat h^{(n)})$, if $g_j(a)\ge 0$, for $j=1,\dots,m$ and $h_i(a)= 0$, for $i=1,\dots,l$, then set $\Omega:=\Omega\cup \{a\}$.
\end{enumerate}
Eventually we get $\Omega=S(g)\cap V(h)$ since $S(g)\cap V(h)\subset V(\hat h^{(1)})\times\dots\times V(\hat h^{(n)})$.
Here the second step is inspired by the work of Nie and Ranestad \cite{nie2009algebraic} on algebraic degrees.
Moreover, If we replace $h$ with $h\cup\{f-f^\star\}$, where $f^\star:=\min f(S(g)\cap V(h))$, then the degrees of the entries of $\hat h^{(t)}$ give the upper bound \eqref{eq:degree.bound.image.semi} on the algebraic degree of the optimal coordinate $x_t$.
Here $d$ is the upper bound on the degrees of $f,g_i,h_j$.
\end{remark}
The following algorithm enables us to compute the dimension of the image of a given basic semi-algebraic set under a given polynomial:
\begin{algorithm}\label{alg:dim.semi}
Computing the dimension of the image of a semi-algebraic set under a polynomial:
\begin{itemize}
\item Input: $f\in\R[x]$, $g=(g_1,\dots,g_m)$ and $h=(h_1,\dots,h_l)$ with $g_i,h_j\in\R[x]$.
\item Output: $d\in\N$.
\end{itemize}
\begin{enumerate}
\item Compute $\hat h=(\hat h_1,\dots,\hat h_r)$ with $\hat h_t\in\R[x_{n+1}]$ such that $V(\hat h)=Z(f(S(g)\cap V(h)))$ by using Algorithm \ref{alg:image.semi}.  
\item Set $d:=\dim(V(\hat h)))$.
\end{enumerate}
\end{algorithm}
As a direct consequence of Lemmas \ref{lem:algimag.well.semi} and \ref{lem:dimequal}, the following lemma follows:
\begin{lemma}\label{lem:alg:dim.semi}
Let $f\in\R[x]$, $g=(g_1,\dots,g_m)$ and $h=(h_1,\dots,h_l)$ with $g_i,h_j\in\R[x]$.
Let $d$ be the output of Algorithm \ref{alg:dim.semi}  with input $f,g,h$.
Then $d$ is the dimension of the image $f(S(g)\cap V(h))$.
\end{lemma}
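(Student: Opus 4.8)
The plan is to chain together the two lemmas out of which Algorithm \ref{alg:dim.semi} is assembled. First I would unwind Step 1: it calls Algorithm \ref{alg:image.semi} on the input $f,g,h$, so by Lemma \ref{lem:algimag.well.semi} the tuple $\hat h=(\hat h_1,\dots,\hat h_r)$ it returns satisfies $V(\hat h)=Z(f(S(g)\cap V(h)))$. Since Step 2 merely sets $d:=\dim(V(\hat h))$, this already gives $d=\dim(Z(f(S(g)\cap V(h))))$.

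Next I would replace the Zariski closure by the image itself using Lemma \ref{lem:dimequal}. To invoke that lemma I must verify its hypothesis, namely that $A:=S(g)\cap V(h)$ is a closed semi-algebraic subset of $\R^n$: the set $S(g)$ is closed because it is cut out by the non-strict inequalities $g_j\ge 0$, the real variety $V(h)$ is closed, and a finite intersection of closed semi-algebraic sets is again closed and semi-algebraic. Hence Lemma \ref{lem:dimequal} yields $\dim(f(A))=\dim(Z(f(A)))$, and combining this with the identity from the first paragraph gives $d=\dim(f(S(g)\cap V(h)))$, which is exactly the assertion.

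There is no genuinely hard step here: the statement is a bookkeeping corollary of results already established, and the only point requiring a moment's care is checking that $A=S(g)\cap V(h)$ meets the closedness hypothesis of Lemma \ref{lem:dimequal} so that the reduction from $Z(f(A))$ to $f(A)$ is legitimate. (The remark after Lemma \ref{lem:dimequal} would even permit $f$ to be a general semi-algebraic function, but the polynomial case suffices here.) I would therefore present the proof as a two-line deduction, citing Lemmas \ref{lem:algimag.well.semi} and \ref{lem:dimequal} in that order.
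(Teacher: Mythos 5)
Your proof is correct and follows exactly the route the paper intends: the paper simply states that Lemma \ref{lem:alg:dim.semi} is "a direct consequence of Lemmas \ref{lem:algimag.well.semi} and \ref{lem:dimequal}," and you chain these two lemmas in the same order, while also taking the extra (and worthwhile) care of verifying that $S(g)\cap V(h)$ is a closed semi-algebraic set so that Lemma \ref{lem:dimequal} actually applies.
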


\section{Applications to polynomial optimization}
\label{sec:application}
\subsection{Main algorithm}
The following algorithm is our main contribution in this paper:
\begin{algorithm}\label{alg:dim.semi.opt}
Computing the optimal value of a polynomial optimization problem:
\begin{itemize}
\item Input: $f\in\R[x]$, $g=(g_1,\dots,g_m)$ and $h=(h_1,\dots,h_l)$ with $g_i,h_j\in\R[x]$.
\item Output: $\bar f^\star\in \R\cup\{\pm \infty\}$.
\end{itemize}
\begin{enumerate}
\item Compute $\hat h=(\hat h_1,\dots,\hat h_r)$ with $\hat h_t\in\R[x_{n+1}]$ such that $V(\hat h)=Z(f(S(g)\cap V(h)))$ by using Algorithm \ref{alg:image.semi}.  
\item Set $\bar f^\star:=\inf V(\hat h)$.
\end{enumerate}
\end{algorithm}
\begin{remark}\label{rem:dim.semi.opt}
By Remark \ref{rem:complex.alg:image.semi}, the degree of each $\hat h_t$ in the first step of Algorithm \ref{alg:dim.semi.opt} is bounded from above by \eqref{eq:degree.bound.image.semi}  when $d$ is the upper bound on the degrees of $f,g_i,h_j$.
Moreover, if $f,g_i,h_j\in\mathbb Q[x]$, then $\hat h_t\in \Q[x_{n+1}]$.
It is because we use a finite number of arithmetic operations on $f,g_i,h_j$ to obtain $\hat h_t$.
\end{remark}
\begin{remark}
To handle the second step of Algorithm \ref{alg:dim.semi.opt}, we need to solve the system of univariate polynomial equations $\hat h_1=\dots=\hat h_r=0$.
It might happen that $\bar f^\star=-\infty$ if $V(\hat h)=\R^n$ (e.g., $\hat h=(0)$) and $\bar f^\star=\infty$ if $V(\hat h)=\emptyset$ (e.g., $\hat h=(1)$).
We now show how to obtain exact $\bar f^\star$ in the case where $\bar f^\star\in \mathbb Q$.
Observe that $\bar f^\star$ is a common rational root of the system $\hat h_1=\dots=\hat h_r=0$.
For $t=1,\dots,r$, we use Thill's algorithm in \cite{thill2008more} to round every numerical root $a$ of the polynomial equation $\hat h_t=0$ to a rational number $\tilde a$ and then verify if the equality $\hat h_t(\tilde a)=0$ holds.
This method requires getting the numerical roots with arbitrarily high precision.
The software MPSolve in \cite{bini2020mpsolve} allows us to do so.

Interested readers can refer to \cite{li2013computing,mantzaflaris2023certified,
mantzaflaris2011continued,henrion2020real,riener2018real,
henrion2016real,henrion2015real} some newest algorithms to compute the real roots of algebraic systems.
\end{remark}

The following lemma follows directly from Lemmas \ref{lem:algimag.well.semi} and \ref{lem:dimequal}:
\begin{lemma}\label{lem:opt.well}
Let $f\in\R[x]$, $g=(g_1,\dots,g_m)$ and $h=(h_1,\dots,h_l)$ with $g_i,h_j\in\R[x]$ such that $f(S(g)\cap V(h))$ is finite.
Let $\bar f^\star$ be the output of Algorithm \ref{alg:dim.semi.opt} with input $f,g,h$.
Then $\bar f^\star=\min f(S(g)\cap V(h))$.
\end{lemma}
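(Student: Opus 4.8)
The plan is to assemble Lemma \ref{lem:opt.well} from the two results already available, namely Lemma \ref{lem:algimag.well.semi} (correctness of Algorithm \ref{alg:image.semi}) and Lemma \ref{lem:dimequal} (the image equals its Zariski closure when finite). First I would observe that, by construction, the output $\hat h$ of Algorithm \ref{alg:dim.semi.opt} is precisely the output of Algorithm \ref{alg:image.semi} on the same input $f,g,h$, since Step 1 of the former is a verbatim call to the latter. Hence Lemma \ref{lem:algimag.well.semi} applies and gives $V(\hat h)=Z(f(S(g)\cap V(h)))$.

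Next I would invoke the hypothesis that $f(S(g)\cap V(h))$ is finite. The set $S(g)\cap V(h)$ is a basic semi-algebraic set and in particular a closed semi-algebraic subset of $\R^n$ (it is the intersection of the closed set $S(g)$ with the variety $V(h)$), so Lemma \ref{lem:dimequal} is applicable with $A=S(g)\cap V(h)$; its ``moreover'' clause yields $f(S(g)\cap V(h))=Z(f(S(g)\cap V(h)))$. Combining this with the equality from the previous paragraph gives
\begin{equation}
V(\hat h)=Z(f(S(g)\cap V(h)))=f(S(g)\cap V(h))\,.
\end{equation}

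Finally, Step 2 of Algorithm \ref{alg:dim.semi.opt} sets $\bar f^\star:=\inf V(\hat h)$, so $\bar f^\star=\inf f(S(g)\cap V(h))$. Since $f(S(g)\cap V(h))$ is a finite set, the infimum is attained and equals the minimum, giving $\bar f^\star=\min f(S(g)\cap V(h))$, as claimed. I do not expect any genuine obstacle here: the statement is essentially a bookkeeping composition of the earlier lemmas, and the only point that needs a word of justification is that $S(g)\cap V(h)$ is closed semi-algebraic so that Lemma \ref{lem:dimequal} may be applied; the finiteness hypothesis is exactly what powers the ``moreover'' clause of that lemma. One could also note, for completeness, that finiteness of $f(S(g)\cap V(h))$ forces $S(g)\cap V(h)$ to be non-empty precisely when the $\min$ is a real number, but this is already subsumed in treating $\inf$ over a finite set.
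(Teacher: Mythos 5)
Your proof is correct and follows the same route the paper takes: the paper explicitly states that the lemma ``follows directly from Lemmas \ref{lem:algimag.well.semi} and \ref{lem:dimequal}'' and its one-line proof is exactly your middle step, that finiteness of $f(S(g)\cap V(h))$ forces it to coincide with its Zariski closure $V(\hat h)$. Your remark that $S(g)\cap V(h)$ is a closed semi-algebraic set, so that Lemma \ref{lem:dimequal} applies, is a correct and worthwhile bit of bookkeeping the paper leaves implicit.
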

To prove Lemma \ref{lem:opt.well}, we remark that $f(S(g)\cap V(h))$ is finite, and hence is identical to  its Zariski closure.

\begin{remark}\label{rem:not.real.radical}
In practice, it might happen that $Z(f(S(g)\cap V(h)))\subsetneq V(\hat h)$ in the first step of Algorithm \ref{alg:dim.semi.opt} if $\check h$ in the first step of Algorithm \ref{alg:image} is not a generator of the real radical of ${I(\tilde h,x_{n+1}-f)}$, i.e., $I(\check h)\subsetneq\sqrt[\R]{I(\tilde h,x_{n+1}-f)}$, where $\tilde h$ is defined as in Step 1 of Algorithm \ref{alg:image.semi}.
If $V(\hat h)$ is finite, i.e., $V(\hat h)=\{t_1,\dots,t_r\}\subset \R$, and $S(g)\cap V(h\cup\{f-t_j\})$ is also finite for $j=1,\dots,r$, then we do the following steps to find $\bar f^\star=\min f(S(g)\cap V(h))$:
\begin{enumerate}
\item Set $A:=\emptyset$.
\item For $j=1,\dots,r$, do:
\begin{enumerate}
\item Compute $S(g)\cap V(h\cup\{f-t_j\})$ as in Remark \ref{rem:extract.point}.
\item If $S(g)\cap V(h\cup\{f-t_j\})\ne\emptyset$, set $A:=A\cup \{t_j\}$. 
\end{enumerate}
\end{enumerate}
Eventually we get $A=f(S(g)\cap V(h))$ so that $\bar f^\star=\min A$.
Moreover, $a \in S(g)\cap V(h\cup\{f-\bar f^\star\})$ is a minimizer of $\min\limits_{x\in S(g)\cap V(h)}f(x)$.
\end{remark}

In the following theorem, we apply Algorithm \ref{alg:dim.semi.opt} to find the optimal value of a polynomial optimization problem:
\begin{theorem}\label{theo:pop.app}
Let $f\in\R[x]$, $g=(g_1,\dots,g_m)$ with $g_j\in\R[x]$.
Assume that problem \eqref{eq:pop} has an optimal solution $x^\star$ and one of the following conditions holds:
\begin{enumerate}
\item The image $f(S(g)\cap C(g))$ is finite, 
\item The image $f(S(g)\cap C^+(g))$ is finite.
\item The Karush--Kuhn--Tucker conditions hold for problem \eqref{eq:pop} at $x^\star$.
\end{enumerate}
Then $f^\star$ defined as in \eqref{eq:pop} can be computed by using Algorithm \ref{alg:dim.semi.opt}.
\end{theorem}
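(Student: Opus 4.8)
The plan is to reduce Theorem \ref{theo:pop.app} to a direct application of Lemma \ref{lem:image.finite} together with Lemma \ref{lem:opt.well}. First I would fix $s\in\{1,2,3,4\}$ according to which of the three hypotheses holds: if hypothesis (1) holds take $s=1$, if (2) holds take $s=2$, and if (3) holds take $s=3$ (the choice $s=4$ works equally well under hypothesis (3)). In each case, combine the assumed finiteness input with Lemma \ref{lem:image.finite} to conclude that the image $f(W_s)$ is non-empty and finite, where $W_s$ is the relevant intersection of $S(g)\times\R^{m+1}$ (or $S(g)\times\R^m$) with the appropriate variety $V(h_{\FJ})$, $V(h_{\FJ}^+)$, $V(h_{\KKT})$, or $V(h_{\KKT}^+)$.

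Next I would invoke the optimal-value identity \eqref{eq:opt.val.cond}, namely $f^\star=\min f(W_s)$, which holds because $x^\star$ is an optimal solution at which the Fritz John conditions (hence also, in case (3), the Karush--Kuhn--Tucker conditions) are satisfied, so $x^\star$ lies in the projection of $W_s$ onto the first $n$ coordinates; combined with the fact that every point of $W_s$ projects into $S(g)$ this forces $\min f(W_s)=f^\star$. Then I would observe that $W_s$ has exactly the form $S(g)\cap V(h)$ appearing in Lemma \ref{lem:opt.well}, after we rename the extra multiplier variables $\bar\lambda$ (or $\lambda$) as additional coordinates: concretely, with the variable vector $(x,\bar\lambda)$ (resp. $(x,\lambda)$), the constraint polynomials $g$ remain unchanged in those variables and $h$ is taken to be the entries of $h_{\FJ}$, $h_{\FJ}^+$, $h_{\KKT}$, or $h_{\KKT}^+$, while $f$ is regarded as a polynomial in $(x,\bar\lambda)$ that happens not to involve $\bar\lambda$. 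Since $f(W_s)$ is finite by the previous step, Lemma \ref{lem:opt.well} applies and tells us that Algorithm \ref{alg:dim.semi.opt}, run on this input, returns $\bar f^\star=\min f(W_s)=f^\star$.

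Finally I would note the minor bookkeeping points: the objective $f$ is constant in the multiplier variables, so no genuine change of problem occurs; the semi-algebraic set $S(g)$ in the enlarged variables is closed, matching the hypotheses of Lemma \ref{lem:dimequal} used inside Lemma \ref{lem:opt.well}; and the output of the algorithm is indeed a real number rather than $\pm\infty$ precisely because $f(W_s)$ is non-empty and finite. I do not anticipate a serious obstacle here: the theorem is essentially a packaging of the earlier lemmas. The one place that requires a little care is verifying that, in case (3), adding the Karush--Kuhn--Tucker polynomials does not accidentally exclude the global minimizer $x^\star$ — this is exactly what the hypothesis ``the Karush--Kuhn--Tucker conditions hold at $x^\star$'' guarantees, and it is also where Lemma \ref{lem:image.finite}(3) gets its finiteness conclusion, so the two are consistent. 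Assembling these observations gives the result.
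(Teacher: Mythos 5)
Your proposal is correct and follows essentially the same route as the paper's proof: apply Lemma \ref{lem:image.finite} to get finiteness and non-emptiness of $f(W_s)$, note that the optimality conditions place $(x^\star,\bar\lambda^\star)$ in $W_s$ so that $f^\star=\min f(W_s)$, and then invoke Lemma \ref{lem:opt.well} with the enlarged variable set $(x,\bar\lambda)$ to conclude that Algorithm \ref{alg:dim.semi.opt} returns $f^\star$. The only cosmetic slip is the parenthetical ``hence also, in case (3), the Karush--Kuhn--Tucker conditions'' --- FJ does not imply KKT; in case (3) KKT holds by hypothesis, not as a consequence of FJ --- but this does not affect the argument.
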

\begin{proof}
Let  $h_{\FJ}$ be defined as in \eqref{eq:.polyFJ}.
Assume that $S(g)\cap C(g)$ is finite.
Lemma \ref{lem:image.finite} says that $f((S(g)\times\R^{m+1})\cap V(h_{\FJ}))$ is non-empty and finite.
Moreover, as shown in \cite[Lemma 1]{mai2022exact}, $(S(g)\times\R^{m+1})\cap V(h_{\FJ})$  contains $(x^\star,\bar\lambda^\star)$ for some $\bar\lambda^\star\in\R^{m+1}$.
By using Algorithm \ref{alg:dim.semi.opt} with input $f,g,h_{\FJ}$, we obtain
\begin{equation}
f^\star=\min f((S(g)\times\R^{m+1})\cap V(h_{\FJ}))=\min f(S(g))
\end{equation}
thanks to Lemma \ref{lem:opt.well}.
Here we consider $f\in\R[x,\bar\lambda]$.
Note that Algorithm \ref{alg:dim.semi.opt} produces a vector of univariate polynomials $\hat h=(\hat h_1,\dots,\hat h_r)$ such that $f^\star=\min V(\hat h)$.
By Remark \ref{rem:dim.semi.opt}, the degree of each $\hat h_t$ is bounded from above by \eqref{eq:bound.degree.univar.poly}.
Similar arguments apply to the cases where $f(S(g)\cap C^+(g))$ is finite and the Karush--Kuhn--Tucker conditions hold for problem \eqref{eq:pop} at $x^\star$.
\end{proof}

\begin{remark}
The result of Theorem \ref{theo:pop.app} requires the attainability of the optimal value $f^\star$. 
In \cite{mai2022semi}, the author proves that every polynomial optimization problem of the form \eqref{eq:pop} with finite optimal value $f^\star$ can be symbolically transformed to an equivalent problem in one-dimensional space with attained optimal value $f^\star$.
To do this, he uses quantifier elimination and algebraic algorithms that rely on the fundamental theorem of algebra and the greatest common divisor.
Let $d$ be the upper bound on the degrees of $f,g_j$. 
His symbolic algorithm has complexity $O(d^{O(n)})$ to produce the objective and constraint polynomials of degree at most $d^{O(n)}$ for the equivalent problem.
\end{remark}


\begin{lemma}\label{lem:imag}
Let $f\in\R[x]$, $g=(g_1,\dots,g_m)$ with $g_j\in\R[x]$.
Set
\begin{equation}\label{eq:hsing}
h_\text{sing}=(\sum_{j=1}^m \lambda_j \nabla g_j, \lambda_1 g_1,\dots,\lambda_m g_m,\sum_{j=1}^m\lambda_j^2-1)\,,
\end{equation}
\begin{equation}\label{eq:hsing.plus}
h_\text{sing}^+=(\sum_{j=1}^m \lambda_j^2 \nabla g_j, \lambda_1^2 g_1,\dots,\lambda_m^2 g_m,\sum_{j=1}^m\lambda_j^2-1)\,.
\end{equation}
Let $\pi:\R^{n+m}\to \R^n$ be the projection defined by 
\begin{equation}
    \pi(x,\lambda)=x\,,\,\forall x\in\R^n\,,\,\forall \lambda\in\R^{m}\,.
\end{equation}
Then the following statements hold:
\begin{enumerate}
\item $C(g)=\pi(V(h_\text{sing}))$ and $C^+(g)=\pi(V(h_\text{sing}^+))$.
\item $f(C(g))=f(V(h_\text{sing}))$ and $f( C^+(g))=f(V(h_\text{sing}^+))$.
\item $S(g)\cap C(g)=\pi((S(g)\times\R^m)\cap V(h_\text{sing}))$ and\\ $S(g)\cap C^+(g)=\pi((S(g)\times\R^m)\cap V(h_\text{sing}^+))$.
\item $f(S(g)\cap C(g))=f((S(g)\times\R^m)\cap V(h_\text{sing}))$ and\\ $f(S(g)\cap C^+(g))=f((S(g)\times\R^m)\cap V(h_\text{sing}^+))$.
\end{enumerate}
\end{lemma}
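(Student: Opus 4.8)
The plan is to reduce all four statements to the first one. Once $C(g)=\pi(V(h_\text{sing}))$ and $C^+(g)=\pi(V(h_\text{sing}^+))$ are established, the remaining statements follow from two elementary observations. First, viewing $f\in\R[x]$ as an element of $\R[x,\lambda]$ it does not involve $\lambda$, so $f(x,\lambda)=f(\pi(x,\lambda))$ and hence $f(A)=f(\pi(A))$ for every $A\subseteq\R^{n+m}$; this turns statement 1 into statement 2 and statement 3 into statement 4. Second, for every $X\subseteq\R^{n+m}$ one has $\pi\bigl((S(g)\times\R^m)\cap X\bigr)=S(g)\cap\pi(X)$, since the condition $x\in S(g)$ places no restriction on $\lambda$; combined with statement 1 (taking $X=V(h_\text{sing})$, resp.\ $X=V(h_\text{sing}^+)$) this gives statement 3. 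So the only substantive work is statement 1.

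For $C(g)$ I would unwind the definition of $\rank$. Write $c_j(x)=(\nabla g_j(x),\,g_j(x)\,e_j)\in\R^{n+m}$ for the $j$-th column of $\varphi^g(x)$, where $e_j$ is the $j$-th standard basis vector of $\R^m$. Since $\varphi^g(x)$ has exactly $m$ columns, $\rank(\varphi^g(x))<m$ holds iff $c_1(x),\dots,c_m(x)$ are linearly dependent, i.e.\ there is $\mu\in\R^m\setminus\{0\}$ with $\sum_{j=1}^m\mu_j c_j(x)=0$; reading off the top $n$ and the bottom $m$ coordinates, this says precisely $\sum_{j=1}^m\mu_j\nabla g_j(x)=0$ and $\mu_j g_j(x)=0$ for all $j$. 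Rescaling $\mu$ (possible as $\mu\neq0$) we may also assume $\sum_{j=1}^m\mu_j^2=1$, and conversely any $\mu$ satisfying these relations is nonzero. Hence $x\in C(g)$ iff $(x,\mu)\in V(h_\text{sing})$ for some $\mu$, i.e.\ $x\in\pi(V(h_\text{sing}))$.

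For $C^+(g)$ I would proceed analogously, now translating $\rank^+$. From its definition, $\rank^+(\varphi^g(x))<m$ is equivalent to $0\in\operatorname{conv}\{c_1(x),\dots,c_m(x)\}$ (the full collection of columns fails to have convex hull avoiding the origin); by the definition of convex hull this means there are $\mu_1,\dots,\mu_m\ge 0$ with $\sum_{j=1}^m\mu_j=1$ and $\sum_{j=1}^m\mu_j c_j(x)=0$, equivalently $\sum_{j=1}^m\mu_j\nabla g_j(x)=0$ and $\mu_j g_j(x)=0$ for all $j$. Substituting $\lambda_j=\sqrt{\mu_j}$ turns these into $\sum_{j=1}^m\lambda_j^2\nabla g_j(x)=0$, $\lambda_j^2 g_j(x)=0$ and $\sum_{j=1}^m\lambda_j^2=1$, i.e.\ $(x,\lambda)\in V(h_\text{sing}^+)$; conversely $\mu_j=\lambda_j^2$ recovers a convex representation of $0$. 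Thus $C^+(g)=\pi(V(h_\text{sing}^+))$, which completes statement 1 and hence the lemma.

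I expect the main obstacle — really the only point beyond bookkeeping — to be the faithful translation of the combinatorial definition of $\rank^+$ into the system $h_\text{sing}^+$: one must check carefully that $\rank^+(\varphi^g(x))<m$ is equivalent to $0\in\operatorname{conv}\{c_j(x)\}$ (rather than to a condition on a proper subset of the columns), and then use the square-root correspondence $\mu_j\leftrightarrow\lambda_j^2$ to pass between nonnegative barycentric coefficients and arbitrary $\lambda_j\in\R$ with $\sum_j\lambda_j^2=1$. The $\rank$ case and the projection/factorization steps are routine.
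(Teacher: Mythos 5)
Your proposal is correct and follows essentially the same route as the paper: prove $C(g)=\pi(V(h_\text{sing}))$ (and the $C^+$ analogue) by unwinding the rank condition into the existence of a suitable $\lambda$, then obtain the remaining three statements for free because $f$ is independent of $\lambda$ and $S(g)\times\R^m$ is a cylinder. The paper is terser on the linear-dependence/rescaling step and on the $\rank^+$ translation, but the substance is identical.
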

\begin{proof}
The equality $C(g)=\pi(V(h_\text{sing}))$ holds thanks to the following equivalences:
\begin{equation}
    \begin{array}{rl}
         &  x\in C(g)\\
        \Leftrightarrow & \rank(\varphi^g(x)) < m\\
        \Leftrightarrow & \exists \lambda\in\R^{m}\,:\, \sum_{j=1}^m\lambda_j^2=1\,,\, \sum_{j=1}^m \lambda_j \nabla g_j(x)=0\,,\, \lambda_j g_j =0\\
        \Leftrightarrow & \exists \lambda\in\R^{m}\,:\,(x,\lambda)\in V(h_\text{sing})\\
        \Leftrightarrow& x\in\pi(V(h_\text{sing}))\,.
    \end{array}
\end{equation}
Note that $C(g)=\pi(V(h_\text{sing}))$ implies $f(C(g))=f(V(h_\text{sing}))$ and $S(g)\cap C(g)=\pi((S(g)\times\R^m)\cap V(h_\text{sing}))$, which yields $f(S(g)\cap C(g))=f((S(g)\times\R^m)\cap V(h_\text{sing}))$.
The remaining cases are handled similarly.
\end{proof}

Next, we test the finiteness of the image of the  singularities of a given basic semi-algebraic set under a given polynomial in the following theorem:
\begin{theorem}\label{theo:dim.critical}
Let $f\in\R[x]$, $g=(g_1,\dots,g_m)$ with $g_j\in\R[x]$.
Let $h_\text{sing}$ be as in \eqref{eq:hsing} and $h_\text{sing}^+$ be as in \eqref{eq:hsing.plus}.
Then the following statements hold:
\begin{enumerate}
\item The dimensions of $f(C(g))=f(V(h_\text{sing}))$ and $f( C^+(g))=f(V(h_\text{sing}^+))$ can be computed by using Algorithm \ref{alg:dim}.
\item The dimensions of $f(S(g)\cap C(g))=f((S(g)\times\R^m)\cap V(h_\text{sing}))$ and $f(S(g)\cap C^+(g))=f((S(g)\times\R^m)\cap V(h_\text{sing}^+))$ can be computed by using Algorithm \ref{alg:dim.semi}.  
\end{enumerate}
\end{theorem}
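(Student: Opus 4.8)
The plan is to reduce the two statements directly to the correctness guarantees of Algorithms~\ref{alg:dim} and~\ref{alg:dim.semi} (Lemmas~\ref{lem:alg:dim} and~\ref{lem:alg:dim.semi}), using the identities collected in Lemma~\ref{lem:imag} to rewrite the images of the critical loci as the image of a real variety, respectively of a basic semi-algebraic set intersected with a real variety, under $f$. No new analysis is needed beyond what is already in place; the argument is essentially bookkeeping.

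First I would handle statement~(1). By the second item of Lemma~\ref{lem:imag} we have $f(C(g))=f(V(h_\text{sing}))$ and $f(C^+(g))=f(V(h_\text{sing}^+))$, where $h_\text{sing}$ and $h_\text{sing}^+$ are the explicit polynomial systems of \eqref{eq:hsing} and \eqref{eq:hsing.plus}, both sitting in $\R[x,\lambda]$. Viewing $f$ as an element of $\R[x,\lambda]$ (it simply does not involve $\lambda$) and running Algorithm~\ref{alg:dim} with input $f$ and $h_\text{sing}$ (resp.\ $h_\text{sing}^+$), Lemma~\ref{lem:alg:dim} tells us the output equals $\dim(f(V(h_\text{sing})))$ (resp.\ $\dim(f(V(h_\text{sing}^+)))$), which by the preceding identities is $\dim(f(C(g)))$ (resp.\ $\dim(f(C^+(g)))$). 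The only point to verify along the way is that $f(V(h_\text{sing}))$ and $f(V(h_\text{sing}^+))$ satisfy the hypothesis of Lemma~\ref{lem:dimequal} used inside Algorithm~\ref{alg:dim} (through Lemma~\ref{lem:algimag.well}): this holds because $V(h_\text{sing})$ and $V(h_\text{sing}^+)$ are real varieties, hence closed semi-algebraic sets.

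For statement~(2), I would argue analogously, this time invoking the fourth item of Lemma~\ref{lem:imag}: $f(S(g)\cap C(g))=f((S(g)\times\R^m)\cap V(h_\text{sing}))$ and likewise with the $+$ superscript. Feeding $f$, $g$, and $h_\text{sing}$ (resp.\ $h_\text{sing}^+$) into Algorithm~\ref{alg:dim.semi} and applying Lemma~\ref{lem:alg:dim.semi} returns $\dim(f(S(g)\cap V(h_\text{sing})))$, which equals $\dim(f((S(g)\times\R^m)\cap V(h_\text{sing})))=\dim(f(S(g)\cap C(g)))$ by Lemma~\ref{lem:imag}; the $C^+$ case is identical. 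Again one notes that $(S(g)\times\R^m)\cap V(h_\text{sing})$ is a closed semi-algebraic set, so Lemma~\ref{lem:dimequal} applies.

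I do not anticipate a genuine obstacle: the theorem is a corollary of Lemma~\ref{lem:imag} combined with the already-established correctness of the two dimension-computing algorithms. The only mild points worth making explicit in the write-up are (i) the closedness check required before Lemma~\ref{lem:dimequal} can be applied, and (ii) the bookkeeping that $f$ must be regarded inside the larger ring $\R[x,\lambda]$ when passed to Algorithms~\ref{alg:dim} and~\ref{alg:dim.semi}, so that eliminating the $x$- and $\lambda$-variables in favor of $x_{n+1}$ inside Algorithm~\ref{alg:image} is compatible with the projection $\pi$ used in Lemma~\ref{lem:imag}.
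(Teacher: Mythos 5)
Your proposal matches the paper's argument exactly: the paper states that Theorem~\ref{theo:dim.critical} follows by applying Lemmas~\ref{lem:imag}, \ref{lem:alg:dim}, and \ref{lem:alg:dim.semi} directly, which is precisely the reduction you carry out. The additional bookkeeping remarks you include (treating $f$ inside $\R[x,\lambda]$, closedness of the relevant sets) are sound and harmless elaborations of the same argument.
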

In order to prove Theorem \ref{theo:dim.critical}, we apply directly Lemmas \ref{lem:imag}, \ref{lem:alg:dim} and \ref{lem:alg:dim.semi}.
\subsection{Examples}
\label{sec:examples}
This section provides several examples to illustrate our method in this paper.
We perform some experiments in Julia 1.7.1 with the software Oscar \cite{OSCAR}.
The codes are available in the link: 
\begin{center}
\url{https://github.com/maihoanganh/FritzJohnConds}.
\end{center}
We use a desktop computer with an Intel(R) Core(TM) i7-8665U CPU @ 1.9GHz $\times$ 8 and 31.2 GB of RAM.

\begin{example}\label{exam:opt}
Consider problem \eqref{eq:pop} with $n=m=2$, $f=x_2$ and $g=(-x_1,x_1-x_2^2)$.
Observe that the optimal value $f^\star=0$ is attained by $f$ on $S(g)$ at the unique global minimizer $(0,0)$ for problem \eqref{eq:pop}.
Moreover, the Karush--Kuhn--Tucker conditions do not hold at this minimizer.
Thus Lasserre's hierarchy \cite{lasserre2001global}  for this example does not have finite convergence  (see \cite[Proposition 3.4]{nie2014optimality}).
Let $h_{\FJ}$ be as in \eqref{eq:.polyFJ}.
Then we obtain
\begin{equation}
h_{\FJ}=(\lambda_0\begin{bmatrix}
0\\
1
\end{bmatrix}-\lambda_1\begin{bmatrix}
-1\\
0
\end{bmatrix}-\lambda_2\begin{bmatrix}
1\\
-2x_2
\end{bmatrix},-\lambda_1 x_1,\lambda_2 (x_1-x_2^2),1-\lambda_0^2-\lambda_1^2-\lambda_2^2)\,.
\end{equation}
We now use Algorithm \ref{alg:dim.semi.opt} to find $f^\star$ as follows:
\begin{itemize}
\item Step 1 of Algorithm \ref{alg:dim.semi.opt}: Compute $\hat h=(\hat h_1,\dots,\hat h_r)$ with $\hat h_t\in\R[x_{n+1}]$ such that $V(\hat h)=Z(f((S(g)\times\R^{m+1})\cap V(h_{\FJ})))$ by using Algorithm \ref{alg:image.semi}.  
\begin{itemize}
\item Step 1 of Algorithm \ref{alg:image.semi}: Set $\tilde h=(g_1-y_1^2,g_2-y_2^2,h_{\FJ})$ with $y=(y_1,y_2)$ being vector of variables independent from $x$.  Then we get
\begin{equation}
\tilde h=\begin{bmatrix}
-x_{1} - y_{1}^2\\
 x_{1} - x_{2}^2 - y_{2}^2\\
 \lambda_1 - \lambda_{2}\\
 2x_{2}\lambda_2 + \lambda_0\\
 -x_{1}\lambda_1\\
 x_{1}\lambda_2 - x_{2}^2\lambda_2\\
 -\lambda_0^2 - \lambda_1^2 - \lambda_2^2 + 1\\
 -x_{2} + x_{3}\\
\end{bmatrix}\,.
\end{equation}
\item Step 2 of Algorithm \ref{alg:image.semi}: Compute $\hat h=(\hat h_1,\dots,\hat h_r)$ with $\hat h_t\in\R[x_{n+1}]$ such that $V(\hat h)=Z(f(V(\tilde h)))$ by using Algorithm \ref{alg:image}.
\begin{itemize}
\item Step 1 of Algorithm \ref{alg:image}: Compute a generator $\check h\subset \R[x]$ of the real radical $\sqrt[\R]{I(\tilde h,x_3-f)}$. 
Then we obtain\footnote{We use the subroutine {\tt radical} of Oscar to compute radical generators of complex varieties although it might happen that the outputs of this subroutine are not real radical generators.}
\begin{equation}
\check h=(
\lambda_1 - \lambda_2,
 \lambda_0,
 y_{2},
 y_{1},
 x_{3},
 x_{2},
 x_{1},
 2\lambda_2^2 - 1)\,.
\end{equation} 
\item Step 2 of Algorithm \ref{alg:image}: Compute a Gr\"obner basis $h^G$ of $I(\check h)$ w.r.t. lex order where
\begin{equation}
x_1\succ x_2\succ y_1\succ y_2\succ \lambda_0\succ \lambda_1\succ \lambda_2\succ x_3\,.
\end{equation}
Then we get $h^G=\check h$.
\item Step 3 of Algorithm \ref{alg:image}: Set $\hat h:=h^G\cap \R[x_{3}]$.
Then we have $\hat h=\{x_3\}$.
\end{itemize}
\end{itemize}
\item Step 2 of Algorithm \ref{alg:dim.semi.opt}: Set $\bar f^\star:=\min V(\hat h)$. Thus it implies that $\bar f^\star=0=f^\star$.
\end{itemize}
By \eqref{eq:equalities.varieties} (with $h=\tilde h$), if $(\bar x,\bar y,\bar \lambda)\in V(h^G)$, then $\bar x$ is a minimizer of problem \eqref{eq:pop}.
In this case, we obtain $\bar x=(0,0)$.
\end{example}

\if{
\begin{remark}
In Example \ref{exam:opt}, if we set $\check h=(\tilde h, x_3-f)$ in Step 1 of Algorithm \ref{alg:image}, then we obtain
\begin{equation}
h^G=
\begin{bmatrix}
 \lambda_1 - \lambda_2\\
 x_{2} - x_{3}\\
 x_{1}\\
 2\lambda_2^2 - 1\\
 x_{3} + \lambda_0\lambda_2\\
 2x_{3}\lambda_2 + \lambda_0\\
 \lambda_0^2 + 2\lambda_2^2 - 1\\
 x_{3}\lambda_0\\
 y_{2}^2\\
 x_{1} + y_{1}^2\\
 -x_{1} + x_{3}^2 + y_{2}^2
\end{bmatrix}
\,.
\end{equation}
It implies that $\hat h=h^G\cap \R[x_{3}]=\emptyset$.
\end{remark}
}\fi

Next, we show in the following example how to compute the dimension of the image of the singularities of a semi-algebraic set under a given polynomial:
\begin{example}\label{exam:compute.dim}
Consider problem \eqref{eq:pop} with $n=2$, $m=3$, $f=x_1-5x_2$ and $g=(x_1^2-x_2,-x_1^2+4x_2,-x_2+1)$.
Observe that the optimal value $f^\star=-7$ is attained by $f$ on $S(g)$ at the unique global minimizer $(-2,1)$ for problem \eqref{eq:pop}.
It is not hard to check that the Karush--Kuhn--Tucker conditions hold for problem \eqref{eq:pop} at this minimizer.
(Moreover, the Fritz John conditions hold for problem \eqref{eq:pop} at a local minimizer $(0,0)$ but the Karush--Kuhn--Tucker conditions do not hold for problem \eqref{eq:pop} at this point.)
Let $h_\text{sing}$ be as in \eqref{eq:hsing}.
Then we obtain
\begin{equation}
\begin{array}{rl}
h_\text{sing}=&(\lambda_1\begin{bmatrix}
2x_1\\
-1
\end{bmatrix}+\lambda_2\begin{bmatrix}
-2x_1\\
4
\end{bmatrix}+\lambda_3\begin{bmatrix}
0\\
-1
\end{bmatrix},\\\\
&\lambda_1(x_1^2-x_2),\lambda_2 (-x_1^2+4x_2),\lambda_3(-x_2+1),1-\lambda_0^2-\lambda_1^2-\lambda_2^2-\lambda_3^2)\,.
\end{array}
\end{equation}
We now use Algorithm \ref{alg:dim.semi} to find $\dim(f(C(g))\cap S(g))$ as follows:
\begin{itemize}
\item Step 1 of Algorithm \ref{alg:dim.semi}: Compute $\hat h=(\hat h_1,\dots,\hat h_r)$ with $\hat h_t\in\R[x_{n+1}]$, such that $V(\hat h)=Z(f((S(g)\times \R^m)\cap V(h_\text{sing})))$ by using Algorithm \ref{alg:image.semi}. 
 \begin{itemize}
\item Step 1 of Algorithm \ref{alg:image.semi}: Set $\tilde h=(g_1-y_1^2,g_2-y_2^2,g_3-y_3^2,h_\text{sing})$ with $y=(y_1,y_2,y_3)$ being vector of variables independent from $x$.  Then we get
\begin{equation}
\tilde h=\begin{bmatrix}
x_{1}^2 - x_{2} - y_{1}^2\\
 -x_{1}^2 + 4x_{2} - y_{2}^2\\
 -x_{2} - y_{3}^2 + 1\\
 2x_{1}\lambda_{1} - 2x_{1}\lambda_{2}\\
 -\lambda_{1} + 4\lambda_{2} - \lambda_{3}\\
 x_{1}^2\lambda_{1} - x_{2}\lambda_{1}\\
 -x_{1}^2\lambda_{2} + 4x_{2}\lambda_{2}\\
 -x_{2}\lambda_{3} + \lambda_{3}\\
 -\lambda_{1}^2 - \lambda_{2}^2 - \lambda_{3}^2 + 1\\
 -x_{1} + 5x_{2} + x_{3}
\end{bmatrix}\,.
\end{equation}
\item Step 2 of Algorithm \ref{alg:image.semi}: Compute $\hat h=(\hat h_1,\dots,\hat h_r)$ with $\hat h_t\in\R[x_{n+1}]$ such that $V(\hat h)=Z(f(V(\tilde h)))$ by using Algorithm \ref{alg:image}.
\begin{itemize}
\item Step 1 of Algorithm \ref{alg:image}: Compute the generator $\check h\subset \R[x]$ of the real radical $\sqrt[\R]{I(\tilde h,x_3-f)}$. 
Then we obtain
\begin{equation}
\check h=(
\lambda_{3},
 \lambda_{1} - 4\lambda_{2},
 y_{2},
 y_{1},
 x_{3},
 x_{2},
 x_{1},
 17\lambda_{2}^2 - 1,
 y_{3}^2 - 1)\,.
\end{equation} 
\item Step 2 of Algorithm \ref{alg:image}: Compute a Gr\"obner basis $h^G$ of $I(\check h)$ w.r.t. lex order where
\begin{equation}
x_1\succ x_2\succ y_1\succ y_2\succ y_3\succ \lambda_1\succ \lambda_2\succ x_3\,.
\end{equation}
Then we get $h^G=\check h$.
\item Step 3 of Algorithm \ref{alg:image}: Set $\hat h:=h^G\cap \R[x_{3}]$.
Then we have $\hat h=\{x_3\}$.
\end{itemize}
\end{itemize}
\item Step 2 of Algorithm \ref{alg:dim.semi}: Set $d:=\dim(V(\hat h)))$.
Then we get $d=\dim(\{0\})=0$.
\end{itemize}
\end{example}

\begin{example}\label{exam:MPCC}
Consider the following mathematical program with complementarity constraints given in \cite[(1.2)]{guo2021mathematical}:
\begin{equation}\label{eq:MPCC}
\begin{array}{rl}
\min & \sum_{j=1}^n z_j^{p}\\
\text{s.t.} & z\ge 0\,,\,Mz+q\ge 0\,,\,z^\top (Mz+q)=0\,,
\end{array}
\end{equation}
where $p\in (0,1)$, $z=(z_1,\dots,z_n)$, $M\in\R^{n\times n}$, and $q\in\R^n$.
Let $p=\frac{1}{2}$ and set $x_j=z_j^p$.
Then $z=x^2=(x_1^2,\dots,x_n^2)$ and problem \eqref{eq:MPCC} can be written as the form \eqref{eq:pop} with $f=\sum_{j=1}^n x_j$ and 
$g=(x,Mx^2+q,-x^{2\top} (Mx^2+q))$.
We now take $n=2$, $M=\begin{bmatrix}
-2  &-1\\
 -1 & -4
\end{bmatrix}$, $q=-M\times \begin{bmatrix}
1\\
1
\end{bmatrix}=\begin{bmatrix}
3\\
5
\end{bmatrix}$ in this example.
It is not hard to prove that problem \eqref{eq:pop} has optimal value $f^\star=0$ and a global minimizer $x^\star=(0,0)$.
By using Algorithm \ref{alg:dim.semi.opt}, we obtain this exact output.
\end{example}

\paragraph{Additional examples.} We display additional results of Algorithm \ref{alg:dim.semi.opt} in Table \ref{tab:bench}.
Here ``$\bar f^\star$'' and ``time''
correspond to the value returned by our method and the running time in seconds to obtain this value, respectively.
\begin{table}
\footnotesize
\caption{\footnotesize Computing $f^\star=\min f(S(g))$.}
\label{tab:bench}
\begin{center}
\begin{tabular}{ c|cccccccc }
input & $n$ & $m$ & $\deg(f)$ &$(\deg(g_j))_{j=1}^m$& $h$ & $f^\star$& $\bar f^\star$ & time \\
\hline
Example \ref{exam:opt} & 2 &2 & 1& (1,2) & $h_{\FJ}$& 0&0& 0.01 \\
Example \ref{exam:compute.dim} & 2 &3 & 1& (2,2,1) & $h_{\KKT}$& -7&-7& 0.01 \\
Example \ref{exam:MPCC} & 2 &5 & 1& (1,1,2,2,4) & $h_{\FJ}$& 0&0& 96 \\
\cite[Example 3]{mai2022exact} & 2 &1 & 2& (3) & $h_{\FJ}$& 0&0& 0.5 \\
\cite[Example 5]{mai2022exact} & 1 &1 & 1& (2) & $h_{\FJ}$& 0&0& 0.02 \\
\cite[Example A.2]{greuet2014probabilistic} & 2& 1&4 &(3) &$h_{\KKT}$& -1 & -1 & 0.04\\
\cite[Example 11]{mai2022exact} & 3 &1 & 6& (2) & $h_{\KKT}$& 0&0& 0.1 \\
\cite[Example 12]{mai2022exact} & 2 &3 & 3& (1,1,1) & $h_{\KKT}$& 0&0& 0.01 \\
\cite[Example 13]{mai2022exact} & 3 &3 & 6& (2,2,2) & $h_{\KKT}^+$& 0&0& 1 \\
\cite[Example 18]{mai2022exact} & 2 &3 & 1& (1,1,2) & $h_{\FJ}^+$& 0&$-\infty$& 0.1 \\
\cite[Example 9]{mai2022exact} & 2 &1 & 1& (3) & $h_{\FJ}^+$& 0&$+\infty$& 0.02 \\
\end{tabular}
\end{center}
\end{table}

The Karush--Kuhn--Tucker conditions do not hold for Example \ref{exam:opt} and \cite[Examples 3 and 5]{mai2022exact} at their minimizers.
In \cite[Examples 11, 12, and 13]{mai2022exact}, Lasserre's hierarchy \cite{lasserre2001global} does not have finite convergence.
Note that \cite[Example 5]{mai2022exact} has infinite number of singularities so that the regularity assumption for the method of Greuet and Safey El Din \cite{greuet2014probabilistic} does not hold.
Table \ref{tab:bench} shows that our value $\bar f^\star$ is the same as the exact optimal value $f^\star$ except the last two examples.
Here \cite[Example 3]{mai2022exact} belongs to the case mentioned in Remark \ref{rem:not.real.radical}.
In \cite[Section 4.2]{mai2022exact}, we only obtain approximate values with low accuracy for this example when using an interior-point method.
Note that \cite[Example 18]{mai2022exact} does not satisfy the finiteness assumption of $f(S(g)\cap C^+(g))$ and $\hat h$ in Step 1 of Algorithm \ref{alg:dim.semi.opt} is empty, which implies $\bar f^\star=\inf V(\hat h)=\inf \R=-\infty$.
In \cite[Example 9]{mai2022exact}, problem \eqref{eq:pop} does not have any minimizer and $\hat h$ in Step 1 of Algorithm \ref{alg:dim.semi.opt} is the singleton $\{1\}$, which implies $\bar f^\star=\inf V(\hat h)=\inf \emptyset=+\infty$.

\paragraph{Acknowledgements.}
The author was supported by the MESRI funding from EDMITT.
\bibliographystyle{abbrv}

\end{document}